\theoremstyle{definition}
\theoremstyle{plain}
\newtheorem{theorem}{Theorem}
\newtheorem{lemma}{Lemma}
\newtheorem{proposition}{Proposition}
\newtheorem{conjecture}{Conjecture}
\title[New Examples of Torsion-Free Non-unique Product Groups]{New Examples of Torsion-Free Non-unique Product Groups}
\author[W.Carter]{William Carter}
\begin{document}

\begin{abstract}
 We give an infinite family of torsion-free groups that do not satisfy the unique product property.  For these examples, we also show that each group contains arbitrarily large sets whose square has no uniquely represented element.
\end{abstract}

\maketitle

\section{Introduction}
If $k[G]$ is a group ring over a torsion-free group, two natural questions that can be asked are what are the zero divisors, and what are the units? Both questions are very well known and considered to be two of the least tractable questions in the theory of group rings. A detailed discussion of the history of these problems (and other interesting open questions) can be found in \cite{MR798076}. 

\begin{conjecture}
\emph{ Zero Divisor Conjecture (Kaplansky)}
\label{zdc}
If $G$ is a torsion-free group and $K$ is an integral domain, then the group ring $K[G]$ has no zero divisors.
\end{conjecture}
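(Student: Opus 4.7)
The statement in question is Kaplansky's Zero Divisor Conjecture, which is a famous long-standing open problem, so any "proof plan" is necessarily speculative; still, I can lay out the standard combinatorial reduction one would attempt and explain precisely where it collapses (an obstacle which, not coincidentally, motivates the rest of the paper).

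The plan is to reduce the ring-theoretic statement to a purely combinatorial one about finite subsets of $G$. Suppose $\alpha,\beta \in K[G]$ are nonzero with $\alpha\beta = 0$, and let $S,T \subseteq G$ be their (finite, nonempty) supports. The coefficient of $x \in G$ in $\alpha\beta$ is $\sum a_g b_h$ summed over all factorizations $x = gh$ with $g \in S$, $h \in T$. If there exists an $x \in ST$ with a \emph{unique} such factorization, that coefficient is $a_g b_h \neq 0$ since $K$ is a domain, contradicting $\alpha\beta = 0$. So the conjecture would follow from the purely combinatorial claim that every pair of finite nonempty subsets $S,T$ of a torsion-free group produces some element of $ST$ with a unique representation — i.e., that every torsion-free group has the \emph{unique product property}.

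First I would set up the reduction above carefully. Next I would attempt to verify the unique product property for $G$ torsion-free, presumably by induction on $\max(|S|,|T|)$: the base case $|S|=|T|=1$ is trivial, and one would hope to extract an "extremal" pair $(g,h)$ — for instance by ordering $G$ so that $gh$ is lexicographically maximal — such that no competing factorization can occur without producing torsion. A left-orderable group argument of this form does succeed and shows left-orderable (hence bi-orderable, locally indicable, diffuse, etc.) torsion-free groups satisfy unique products, and so fall under Kaplansky.

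The hard part, and the genuine obstacle, is that this combinatorial strategy cannot possibly succeed in full generality: the unique product property is strictly stronger than torsion-freeness, as demonstrated already by Rips--Segev and Promislow, and as the present paper reinforces by producing a new infinite family of torsion-free non-unique-product groups. Consequently, any complete proof of Kaplansky's conjecture must abandon the naive support-combinatorics route and invoke genuinely ring-theoretic or analytic structure — e.g., Linnell's $L^2$-methods for groups in Linnell's class $\mathcal{C}$, division-closure arguments inside a tracial von Neumann algebra, or the recent disproofs of related unit/idempotent conjectures that suggest Kaplansky's conjecture may itself require a counterexample rather than a proof. I would therefore present the above only as the natural first attempt, with the explicit caveat that the unique product detour is provably insufficient.
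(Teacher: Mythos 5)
This statement is labeled a \emph{conjecture} in the paper precisely because it is Kaplansky's long-standing open problem; the paper offers no proof of it, and none is expected. You correctly recognize this, and your account of the standard reduction --- that a uniquely represented element of the product of the supports would force a nonzero coefficient over an integral domain, so the unique product property implies the conjecture --- matches exactly the role the paper assigns to that property ("initially conceived as an attempt to solve these conjectures"), as well as your correct observation that this reduction is provably insufficient in general, which is the very point the paper's examples $P_k$ reinforce. No gap to report: you have accurately identified the statement as open rather than manufacturing a fallacious proof.
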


Similarly, the second conjecture, which implies Conjecture \ref{zdc}, can be stated as.
\begin{conjecture}
\emph{Nontrivial Units Conjecture (Kaplansky)}
\label{nuc}
If $G$ is a torsion-free group and $K$ is a field, then the only units in $K[G]$ are the trivial ones, i.e. those of the form $kg$ where $k\in K-\{0\}$ and $g\in G$.
\end{conjecture}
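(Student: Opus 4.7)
This is Kaplansky's Nontrivial Units Conjecture, which remains open in full generality, so any plan must be read as pointing toward the standard partial approach rather than a complete proof. The plan is to reduce the statement to a purely combinatorial property of $G$, namely the unique product property: for any finite nonempty $A,B\subseteq G$ there exists $g\in AB$ with a unique representation $g=ab$ with $a\in A$, $b\in B$. Suppose $G$ has this property and let $u=\sum_{g\in A}a_g g\in K[G]$ be a unit with inverse $v=\sum_{h\in B}b_h h$, where $A,B$ are the supports. Applying the unique product property to $A$ and $B$ produces an element of $AB$ whose coefficient in $uv$ equals a single nonzero product $a_g b_h$; comparing with $uv=1$ forces $|A|=|B|=1$, so $u=kg$ is trivial. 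This is the main route, and it immediately reduces the problem to establishing the unique product property for an arbitrary torsion-free $G$.

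The next step is to try to verify the unique product property for $G$ by embedding it into, or approximating it by, one of the well-understood classes where the property is known: left-orderable groups (via extremal elements in a lexicographic order), Bowditch's diffuse groups, and residually torsion-free-nilpotent groups, which already covers free groups, surface groups, and many one-relator groups. Failing a direct structural embedding, I would attempt a local minimal-counterexample argument: choose $A,B$ minimizing $|A|+|B|$ such that every $g\in AB$ factors in at least two ways, and try to contradict this via a pigeonhole estimate on $|AB|$ or via an action of $G$ on some totally ordered $G$-set.

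The fundamental obstacle, and the reason the conjecture is open, is precisely that the unique product property \emph{fails} for some torsion-free groups: Rips--Segev, Promislow, and Steenbock have produced such examples, and the present paper enlarges this list substantially. Consequently the unique product route cannot settle Conjecture \ref{nuc} in full generality, and any complete proof must exploit something strictly weaker, such as Linnell's analytic machinery around $L^2$-Betti numbers and the Atiyah conjecture, or a yet-to-be-discovered invariant detecting trivial units directly inside $K[G]$. The honest plan, then, is to present the unique product reduction above and acknowledge that for the groups constructed in the body of this paper the reduction is unavailable, which is exactly what motivates the combinatorial investigation that follows.
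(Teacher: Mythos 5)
This statement is Kaplansky's Nontrivial Units Conjecture; the paper states it as an open conjecture and offers no proof, and you correctly recognize that no proof is possible here -- indeed the paper's entire purpose is to enlarge the list of torsion-free groups for which the unique product reduction you describe is unavailable. So your proposal is not in conflict with the paper; it is an accurate account of the state of the problem.

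One technical caveat in your sketch of the reduction: a \emph{single} uniquely represented element $g_0\in AB$ only tells you that the coefficient of $g_0$ in $uv$ is the nonzero product $a_gb_h$, which is perfectly consistent with $uv=1$ (take $g_0=1$ and $a_gb_h=1$); it does not by itself force $|A|=|B|=1$. The units conjecture requires the \emph{two unique products} property -- two distinct uniquely represented elements of $AB$ whenever $|A|+|B|>2$ -- since then $uv$ would have support of size at least $2$, contradicting $uv=1$. This is harmless because Strojnowski proved that the unique product property is equivalent to the two unique products property, but the step should be cited or argued rather than asserted. Your remaining remarks, including the observation that the unique product route cannot close the conjecture in general precisely because of examples like those constructed in this paper, are correct.
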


The unique product property was initially conceived as an attempt to solve these conjectures. A group is $G$ is said to satisfy the \emph{unique product property} if given any two non-empty finite sets $X,\;Y \subset G$ then at least one element, say $z$ in the product set $XY=\{xy \mid x\in X$ and $y\in Y \}$ can be written uniquely as a product, $z=xy$ where $x\in X$ and $y\in Y$. Finite product sets within these groups are studied in \cite{MR2975160}. Many familiar groups satisfy this property, for example, orderable groups \cite{MR798076}, diffuse groups \cite{MR1794287} and locally indicable groups \cite{MR0310046}. Moreover, it is well known that every right orderable group satisfies this property. The converse, however, is still open.  

Any group with torsion does not satisfy the unique product property, so the only interesting examples of groups without this property would necessarily be torsion-free.
There are only two known examples of torsion-free groups that do not satisfy the unique product property (excluding, of course, torsion-free groups that contain either of these two examples as a proper subgroup). 
 
The first example was given by E. Rips and and Y. Segev. The authors showed that there exists a family of torsion-free groups that do not satisfy this property \cite{MR887195} ( See \cite{2013arXiv1307.0981S}, for a recent treatment of this family of groups). In their examples, given predetermined sets, relations for a group were carefully constructed that in such a way that the resulting group is torsion-free and contains the two sets as a pair of non-unique product sets. Many seemingly natural questions regarding these groups are still open. In particular, nothing is known about these groups in relation to Conjectures \ref{zdc} or \ref{nuc}.  

The second known example of a group that does not satisfy the unique product property and the only known explicit example of such a group was given by D. Promislow in \cite{MR940281}. By means of a random search algorithm, he found a $14$ element set $S$ in the group 
\[P = \langle x, y \mid xy^2x^{-1}y^2,\; yx^2y^{-1}x^2 \rangle\]
 with the property that $SS$ has no uniquely represented element. We will call such a set $S$ a \emph{non-unique product set}. Given the nature of the search, very little is known about other non-unique product sets in $P$ or about how to extend this result to other groups. 
 
A result due to Lewin, \cite{MR0292957}, shows that $P$ satisfies Conjecture \ref{zdc}.
\begin{theorem}
\emph{(Lewin) }
\label{Lewin}
If $G= G_1 *_{G_N} G_2$ a free product with amalgamation, where
\begin{enumerate}
\item $G_N$ is normal in both $G_1$ and $G_2$;
\item $F[G_1]$ and $F[G_2]$ have no zero divisors;
\item $F[G_N]$ satisfies the Ore condition. 
\end{enumerate}
Then $F[G]$ has no zero divisors.
\end{theorem}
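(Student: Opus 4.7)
The plan is to exploit the crossed-product and coproduct structure of $F[G]$ to reduce the zero-divisor question to a known ring-theoretic fact about free products of domains amalgamated over a division ring. First, since $G_N \le G_1$ and $F[G_1]$ is a domain by hypothesis (2), the subring $F[G_N]$ is also a domain; combined with hypothesis (3), the Ore condition yields a classical ring of fractions $D = F[G_N]S^{-1}$ with $S = F[G_N]\setminus\{0\}$, and because $F[G_N]$ is an Ore domain, $D$ is a division ring.

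Next, I would use the normality of $G_N$ in each $G_i$ to realize $F[G_i]$ as a crossed product $F[G_N] *^{\sigma} (G_i/G_N)$, with $\sigma$ induced by conjugation. Since $G_N \lhd G_i$, the set $S$ is permuted by conjugation and remains an Ore set in $F[G_i]$, so one may form the localization $R_i := F[G_i]S^{-1}$, which inherits the crossed-product presentation $D *^{\sigma} (G_i/G_N)$. A key verification is that each $R_i$ is still a domain: any equation $\alpha\beta = 0$ in $R_i$ can be cleared to a common denominator to yield $\alpha'\beta' = 0$ in $F[G_i]$, contradicting hypothesis (2).

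The final step exploits the normal form of the amalgamated free product. Every element of $G$ is a word alternating between coset representatives of $G_N$ in $G_1$ and in $G_2$ (followed by a $G_N$-tail), and this normal form lifts to an embedding
\[
F[G] \hookrightarrow R_1 *_{D} R_2,
\]
the coproduct of rings over the common division subring $D$. To close the argument, I would invoke Cohn's theorem from the theory of ring coproducts, which asserts that the coproduct of two domains over a common division subring is itself a domain. Since $F[G]$ embeds into the domain $R_1 *_D R_2$, the ring $F[G]$ has no zero divisors.

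The main obstacle is this last step, namely justifying the embedding into the ring coproduct and invoking the coproduct theorem correctly. Establishing that the group-theoretic normal form on $G$ induces a faithful normal form at the ring level requires Cohn's machinery for coproducts over firs/division rings, whose central content is that products of alternating normal-form tensors preserve a leading/trailing-term structure that prevents multiplication from collapsing nonzero elements to zero. Step~2, by contrast, is essentially a computation propagating the Ore condition through the crossed-product structure, and should be routine once one checks that conjugation by elements of $G_i$ permutes $S$ within $F[G_N]$.
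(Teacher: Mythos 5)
Your proposal is correct and is essentially Lewin's original argument: pass to the division ring of fractions $D$ of the Ore domain $F[G_N]$, use normality of $G_N$ to localize each crossed product $F[G_i]$ at $S=F[G_N]\setminus\{0\}$, identify $F[G]$ inside the ring coproduct $R_1 *_D R_2$, and invoke Cohn's theorem that a coproduct of domains over a common skew field is a domain. The paper itself gives no proof of this statement (it only cites Lewin \cite{MR0292957}), and your outline, including the identification of the coproduct normal-form step as the real technical content, matches that source.
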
 

To see this, note that $P\cong K*_{\mathbb{Z}^2}K$, where $K$ is a Klein bottle group and we identify index $2$ subgroups that are isomorphic to $\mathbb{Z}^2$ in each copy of $K$. The second condition holds since torsion-free one relator groups are locally indicable \cite{MR741011} and group rings over locally indicable groups satisfy Conjecture \ref{zdc}. For the last condition, it is well known that a group ring over an abelian group satisfies the Ore condition. It is still unknown whether $P$ satisfies Conjecture \ref{nuc}.

The purpose of this paper is to generate new simple examples of groups that do not satisfy the unique product property and to produce non-unique product sets whose existence can be inferred from the relations in the group.  Currently, it is not all together clear where to look for such groups or even sets within these groups. All that is currently known is that these groups must be non-left orderable. In fact, this is precisely why $P$ was initially seen as a likely candidate \cite{MR536897}; however, this does not tell us how to find such sets or even if they exist (clearly, any finite pair of subsets will not work). The hope is that generating more examples will lead to a better understanding of the structure of such groups. In Section $4$, we do so by generalizing $P$ in the following way.

\begin{theorem}
\label{nup}
For each $k>0$, the torsion-free group
\[P_k=\langle a, b, \mid ab^{2^k}a^{-1}b^{2^k},\; ba^2b^{-1}a^2 \rangle \]
does not satisfy the unique product property, and for $k>1$, does not contain $P$. 
\end{theorem}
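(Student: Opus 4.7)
The plan breaks into three parts mirroring the three claims: torsion-freeness, failure of the unique product property, and non-containment of $P$ for $k>1$.

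\emph{Torsion-freeness.} The relation $ab^{2^k}a^{-1} = b^{-2^k}$ implies that $a^2$ commutes with $b^{2^k}$, so $\langle a^2, b^{2^k}\rangle$ is at worst a homomorphic image of $\mathbb{Z}^2$. This suggests writing $P_k$ as an amalgamated free product $K_1 *_{\mathbb{Z}^2} K_2$, where $K_1 = \langle a, c \mid aca^{-1}c\rangle$ is a Klein bottle group (with $c \leftrightarrow b^{2^k}$), $K_2 = \langle b, d \mid bdb^{-1}d\rangle$ is another Klein bottle group (with $d \leftrightarrow a^2$), and the common $\mathbb{Z}^2$ embeds at index $2$ in $K_1$ and at index $2^k$ in $K_2$. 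Comparing presentations verifies the isomorphism with $P_k$. Torsion-freeness then follows from that of Klein bottle groups, and this decomposition has the added benefit of confirming $\langle a^2, b^{2^k}\rangle \cong \mathbb{Z}^2$ on the nose.

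\emph{Failure of the unique product property.} This is the main content and the step I expect to be the principal obstacle. The plan is to exhibit, for each $k$, an explicit finite set $S_k \subset P_k$ such that $S_kS_k$ has no uniquely represented element, generalizing Promislow's $14$-element set in $P$. A natural heuristic is that the ``$b$-side'' collisions should be driven by the relation $ab^{2^k} = b^{-2^k}a$, so I would replace appropriate occurrences of $b$ in Promislow's elements by $b^{2^{k-1}}$, leaving ``$a$-side'' collisions governed by the unchanged relation $ba^2 = a^{-2}b$. Once $S_k$ is identified, the verification reduces to a finite combinatorial check that every product $s_1s_2 \in S_kS_k$ coincides with some distinct $s_3s_4$ via a single application of one of the two relators. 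The difficulty is to find a uniform family $S_k$ and to organize the case analysis so that the argument works simultaneously for all $k$ rather than requiring a separate computer search in each case.

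\emph{Non-containment of $P$ for $k>1$.} Any injection $P \hookrightarrow P_k$ would send the generators of $P$ to elements $\alpha, \beta \in P_k$ satisfying both $\alpha\beta^2\alpha^{-1}\beta^2 = 1$ and $\beta\alpha^2\beta^{-1}\alpha^2 = 1$ while generating a copy of $P$. I would rule this out by combining a Bass--Serre analysis on the tree for $P_k = K_1 *_{\mathbb{Z}^2} K_2$, in which the asymmetry between the edge-to-vertex indices $2$ and $2^k$ obstructs the self-symmetric amalgamated structure of $P$, with abelianization bookkeeping: $P^{ab} \cong \mathbb{Z}/4 \oplus \mathbb{Z}/4$ while $P_k^{ab} \cong \mathbb{Z}/4 \oplus \mathbb{Z}/2^{k+1}$, which constrains where $\alpha$ and $\beta$ can land modulo commutators and, together with the tree analysis, yields a contradiction. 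Of the three parts, this is the most flexible: either route (tree action or abelian invariants) should work, whereas Part~2 really requires producing the set.
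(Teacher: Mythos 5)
Your Part~1 matches the paper: $P_k \cong K_1 *_{\mathbb{Z}^2} K_2$ with two Klein bottle groups amalgamated over $\langle a^2, b^{2^k}\rangle$, and torsion-freeness follows from the amalgam structure. Your Part~3 is also broadly the paper's route (a Bass--Serre analysis forcing a putative copy of $P$ to act on a line in the tree, then ruling out pairs $A,B$ on that line satisfying both relators), though the abelianization remark does no real work --- the abelianization of the ambient group does not constrain the abelianization of a subgroup --- and all the content lies in the case analysis (both $A,B$ hyperbolic, one hyperbolic and one elliptic, both elliptic), which you have not carried out.

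The genuine gap is Part~2, which you correctly identify as the principal obstacle but then address with a heuristic that cannot succeed as stated. Your plan is to transplant Promislow's $14$-element set by substituting $b^{2^{k-1}}$ for suitable occurrences of $b$, so that the collisions in $S_kS_k$ are again produced by the two Promislow relators applied to a pair of elements. But for $k>1$ no pair $\alpha,\beta \in P_k$ satisfies both $\alpha\beta^2\alpha^{-1}\beta^2=1$ and $\beta\alpha^2\beta^{-1}\alpha^2=1$ (with $\langle\alpha,\beta\rangle$ not virtually cyclic) --- that is precisely the content of the non-containment statement you are also trying to prove. Concretely, with $\beta=b^{2^{k-1}}$ one has $a\beta^2a^{-1}\beta^2=1$ but $\beta a^2\beta^{-1}a^2=a^4\neq 1$ since $2^{k-1}$ is even, so only one of the two collision mechanisms survives the substitution and Promislow's verification does not transport. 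The paper's construction is of a different nature: it takes a union of long $b$-progressions $X_i=\{b^ia^{-1}b^j\}$, $Y_l=\{b^lab^j\}$, $Z_0=\{b^j\}$ with lengths on the order of $2^k$ (giving $|T|=2^{2k+1}+2^{k+2}+1$, which is $17$ rather than $14$ even when $k=1$), matches most products by overlapping consecutive slices of these progressions along diagonals, and only invokes the relations $ab^{2^k}a^{-1}b^{2^k}$ and $ba^2b^{-1}a^2$ to absorb the boundary slices into other subproducts. To complete your Part~2 you would need either to produce and verify such a family of sets or to find some other uniform source of collisions; the substitution idea should be abandoned.
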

Note that the group $P_1$ is the same as Promisow's example $P$.
The relations of $P_1$ and $P_k$ are similar, but the groups are quite different. For example, it is well known that $P$ is a finite extension of $\mathbb{Z}^3$ and as such is supersolvable. In contrast, the groups $P_k$ for $k>1$ are much larger. One can show $P_k$ contains a finite index subgroup isomorphic to $\mathbb{Z}^2 \times F$, where $F$ is a finitely generated free group. In particular, these groups are also not amenable and hence are not solvable.  An argument, identical to the one above, shows that each $P_k$ satisfies the hypotheses of Theorem \ref{Lewin} and thus every group $P_k$ satisfies Conjecture \ref{zdc}. 

These groups are generalizations of $P$ in the sense that each $P_k$ is is an amalgamation of Klein bottle groups over $\mathbb{Z}^2$. However, we wish to emphasize that the non-unique product sets we construct in Section $4$ are not generalizations of Promislow's set $S$ found \cite{MR940281}, but rather arise from a careful study of the geometry of the Cayley graph given by the presentation above.  Roughly, the idea is to construct specific paths in the Cayley graph taken sufficiently long so that the Klein bottle relations force certain paths from the product set to overlap nicely. In Section $5$, this idea is extended to longer paths in the Cayley graph to prove the following result. 

\begin{theorem}
\label{unbound}
Each group $P_k$ contains arbitrarily large non-unique product sets.
\end{theorem}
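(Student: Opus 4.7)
The plan is to construct, for each $k$ and each $n\geq 1$, an explicit finite subset $S_n\subseteq P_k$ whose cardinality grows without bound in $n$ and such that every element of $S_nS_n$ has no unique factorization in $S_n\times S_n$. The construction follows the geometric blueprint already used for Theorem \ref{nup}: each element of $S_n$ will be the endpoint of a prescribed walk in the Cayley graph of $P_k$, and the Klein bottle relations $ab^{2^k}a^{-1}=b^{-2^k}$ and $ba^2b^{-1}=a^{-2}$ allow one to ``reflect'' a segment of such a walk without changing its endpoint. The key new ingredient compared to Section~4 is to take walks with $n$ alternations between $a$-segments and $b$-segments rather than a bounded number, which multiplies the available reflection sites and hence the number of distinguished endpoints that will become the elements of $S_n$.

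Concretely, I would fix a family of admissible walks in the Cayley graph whose $a$-segments have lengths that are compatible multiples of $2$ and whose $b$-segments have lengths that are compatible multiples of $2^k$, and I would take $S_n$ to consist of the endpoints obtained by truncating such walks at a growing collection of distinguished checkpoints. Given $x,y\in S_n$, the concatenated walk representing $xy$ passes through the junction vertex $x$; the strategy is to exhibit, in a bounded neighborhood of this junction, a segment to which one of the Klein bottle relations applies, yielding a reflected walk to the same endpoint $xy$ whose new set of checkpoints realises a different pair $(x',y')\in S_n\times S_n$ with $x'y'=xy$ and $(x',y')\neq(x,y)$. The distinctness of the elements of $S_n$ themselves should follow from the normal form coming from the amalgamated product decomposition $P_k\cong K*_{\mathbb{Z}^2}K$ of the excerpt.

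The principal obstacle is the non-uniqueness verification, since the number of ordered pairs grows quadratically in $|S_n|$ and each one must be assigned an alternative decomposition that lands back in $S_n$. The tactic is to reduce this to a bounded case analysis by classifying each product $xy$ according to the last few generators of $x$ and the first few generators of $y$; because the Klein bottle relations only interact with a bounded window of the walk, each such junction type can be resolved uniformly in $n$, provided the parity and mod-$2^k$ structure built into the segment lengths guarantees that a legal flip straddles every possible junction. Designing the segment-length conditions so that every junction type is covered, and verifying that the reflected endpoints remain inside $S_n$ rather than escaping the family, is where the real work lies; once this invariant is in place, the cardinality of $S_n$ grows with the number of checkpoints along the underlying walks and the theorem follows.
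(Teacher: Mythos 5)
Your outline correctly identifies the general theme --- reuse the slice-matching machinery of Section 4 on a scaled-up family of sets --- but the specific construction you propose is not the one that works in the paper, and the step you defer (``designing the segment-length conditions so that every junction type is covered, and verifying that the reflected endpoints remain inside $S_n$'') is precisely the content of the theorem, so as written this is a plan rather than a proof. More concretely, your key new ingredient, taking walks with $n$ alternations between $a$-segments and $b$-segments, runs into a real obstacle: the matchings in Section 4 depend essentially on each element of $T$ having at most one $a$-syllable, so that a product has at most two, and the Klein bottle relations can then \emph{collapse} the pair entirely (e.g.\ $b^{n}a^{-\epsilon}b^{2^k}a^{\epsilon}b^{i}=b^{n+i-2^k}$ lands in $Z_0Z_0$) or commute a full square past a $b$-power. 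With $n$ alternations these collapses no longer occur, and the relation $ba^{2}b^{-1}=a^{-2}$ does not act in a ``bounded window'': pushing a $b^{2^k}$ or an $a^{2}$ through the word flips signs in every subsequent syllable, so the reflected factorization differs from the original along its whole length, not just near the junction. You would then need $S_n$ to be closed under all of these global sign-flips simultaneously, and nothing in your sketch guarantees that.

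The paper's actual route is simpler and avoids this entirely: it keeps the exact word shapes of Section 4 (a single $a$-syllable, i.e.\ words $b^{i}a^{\pm p}b^{j}$ together with a $b$-progression $Z_0(p,q)$) and grows the sets by two scaling parameters --- an arbitrary odd exponent $p$ on $a$ and a length parameter $q$ with $q-1$ a multiple of $2^k$ governing the $b$-progressions. The parity of $p$ preserves all the $\sigma$-type sign bookkeeping, and the divisibility condition on $q$ ensures the endpoints of $Z_0(p,q)$ are multiples of $2^k$ so the boundary matchings survive; the entire case analysis of Section 4.2 then goes through verbatim after reindexing exponents, yielding sets of cardinality $(2^{2k+1}+5\cdot 2^{k}+2)q-(2^k+1)$, which is unbounded in $q$. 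If you want to salvage your approach, you should either specialize it to this one-syllable, long-progression family, or else supply the (substantial) additional argument that handles the propagation of the Klein bottle relations through many alternations.
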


\section{Preliminaries}
If a group $G$ acts by automorphisms on a simplicial tree $T$ without inversion (that is, no element of $G$ exchanges the endpoints of an edge $e$), then $T$ is called a $G$-tree. The action is said to be \emph{trivial} if $G$ fixes a point and \emph{minimal} if there is no invariant $G$-subtree except for $T$ itself. 

In this setting, an automorphism is said to be \emph{elliptic} if it fixes a point and \emph{hyperbolic} otherwise. If $g$ is elliptic, we define $Fix(g)$ to be the set of all points fixed by $g$. Following \cite{MR1954121}, we can characterize these automorphisms in the following way.
\begin{proposition}
\label{bass serre}
Let $G$ be  group that acts on a simplicial tree $T$ by automorphisms without inversion.
\begin{enumerate}
\item If $g\in G$, then either $g$ acts on a unique simplicial line in $T$ by translations or $Fix(g) \neq \emptyset$.
\item If $g_1,\; g_2 \in G$ and $Fix(g_1),\; Fix(g_2)$ are nonempty and disjoint, then $Fix(g_1g_2) = \emptyset$.
\item If $G$ is generated by a finite set of elements $s_1,\;s_2,\; \dots,\; s_m$ such that $s_j$ and $s_is_j$ fix points in $T$ for all $i$, $j$, then the action of $G$ is trivial. 
\end{enumerate}
\end{proposition}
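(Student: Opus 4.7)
The proof would split naturally into three parts, with (3) depending on (1) and (2). The core tool throughout is the translation length $\ell(g)=\min_{v}d(v,g\cdot v)$, together with the fact that in a tree every pair of convex subsets has a well-defined bridge between them.

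For (1), the plan is to study the set $M_g\subseteq T$ of vertices achieving $\ell(g)$. If $\ell(g)=0$ then $M_g=Fix(g)$ is nonempty and we are done. If $\ell(g)>0$, I would pick $v\in M_g$ and analyze the geodesic $[v,g\cdot v]$: because $g$ has no inversion, one checks that the concatenation of translates $\bigcup_{n\in\mathbb{Z}} g^n\cdot[v,g\cdot v]$ is an embedded simplicial line $L$ on which $g$ acts by translation of length $\ell(g)$. For uniqueness, note that any simplicial line on which $g$ acts by translations must consist of vertices achieving the translation length (otherwise the midpoint of a geodesic from an off-axis vertex to its image would achieve a smaller displacement), so the line equals $M_g=L$.

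For (2), assume $Fix(g_1)$ and $Fix(g_2)$ are nonempty, disjoint subtrees. Let $[p_1,p_2]$ be the unique bridge between them, with $p_i\in Fix(g_i)$ and $d(p_1,p_2)$ minimal. The image $g_1 g_2\cdot p_2=g_1\cdot p_2$ sits at the end of the geodesic obtained by reflecting $[p_1,p_2]$ across $p_1$, so the path $p_2\to p_1\to g_1\cdot p_2$ is a geodesic of length $2d(p_1,p_2)$. I would then argue that this segment is contained in the axis of $g_1 g_2$ and that the translation length is exactly $2d(p_1,p_2)>0$; by part~(1), $g_1 g_2$ is hyperbolic, so $Fix(g_1 g_2)=\emptyset$.

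For (3), I would combine (2) with the Helly property for subtrees: any finite collection of pairwise intersecting subtrees of a tree has nonempty total intersection. Since each $s_j$ and each $s_i s_j$ has a fixed point, the contrapositive of (2) forces $Fix(s_i)\cap Fix(s_j)\neq\emptyset$ for all $i,j$. The Helly property then produces a common fixed point $v\in\bigcap_i Fix(s_i)$, and since $G$ is generated by the $s_i$, the vertex $v$ is fixed by all of $G$, making the action trivial. The main technical obstacle is the careful analysis in~(1): one must verify that $M_g$ really is a line (not a larger subtree) and that the translation length is realized only there, and the no-inversion hypothesis must be used precisely to rule out the pathology where $g$ flips the midpoint of an edge. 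Once (1) is in hand, (2) and (3) follow by relatively clean combinatorial arguments on subtrees.
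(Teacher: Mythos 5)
Your proposal is correct and follows the standard argument: the min-set/translation-length analysis for part (1), the bridge construction showing a product of two elliptics with disjoint fixed sets is hyperbolic for part (2), and Serre's lemma via the Helly property for subtrees for part (3). The paper offers no proof of its own, citing Serre's \emph{Trees}, and the argument given there is essentially the one you outline.
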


The unique simplicial line in $(1)$ is called the \emph{axis} of $g$ and denoted $A_g$.
Further, following \cite{MR907233}, we can describe minimal subtrees in the following way.
\begin{proposition}
\label{cm tree}
If $G$ is finitely generated and $T$ is a non-trivial $G$-tree, then $T$ contains a unique minimal $G$-invariant subtree, which is the union of the axes of all the hyperbolic elements in $G$.
\end{proposition}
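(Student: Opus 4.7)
The plan is to define $T_0$ to be the union of the axes $A_g$ over all hyperbolic $g \in G$, and prove that $T_0$ is the unique minimal $G$-invariant subtree. First, I need $T_0$ to be non-empty: since the action is non-trivial, Proposition \ref{bass serre}(3) forces some generator $s_j$ or some product $s_i s_j$ to fail to fix a point, hence by Proposition \ref{bass serre}(1) to be hyperbolic and contribute an axis to $T_0$.

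Next I would show $T_0$ is a $G$-invariant subtree. The $G$-invariance is immediate: if $g$ is hyperbolic with axis $A_g$, then for any $h \in G$ the conjugate $hgh^{-1}$ is hyperbolic with axis $h \cdot A_g$, so $T_0$ is closed under the $G$-action. The more delicate point is connectedness, namely that if $A_g$ and $A_h$ are disjoint, then the bridge arc $[p,q]$ from $A_g$ to $A_h$ also lies in $T_0$. For this I would invoke the standard ``bridge lemma'' for tree automorphisms: for $n$ sufficiently large, the element $g^n h^n$ is hyperbolic with translation length at least $2n \cdot \mathrm{length}([p,q]) + \ldots$ and its axis contains $[p,q]$, so the bridge is absorbed into $T_0$. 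Together with the edges of the axes themselves, this makes $T_0$ a connected subtree of $T$.

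For minimality, let $T'$ be any non-empty $G$-invariant subtree. Fix a hyperbolic $g \in G$ and a point $x \in T'$. Since $T'$ is $g$-invariant, the arc $[x, gx] \subset T'$, and the projection $\pi_g(x)$ of $x$ onto $A_g$ (the nearest-point projection, which exists uniquely in a tree) lies on this arc, hence in $T'$. Applying powers of $g$ to $\pi_g(x)$ and taking the union gives all of $A_g$ inside $T'$. Hence $A_g \subseteq T'$ for every hyperbolic $g$, so $T_0 \subseteq T'$. This shows $T_0$ is minimal, and also gives uniqueness: any other minimal $G$-invariant subtree must contain $T_0$ and therefore equal it.

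I expect the main obstacle to be the bridge step, i.e.\ verifying that an explicit product such as $g^n h^n$ really is hyperbolic with axis passing through $[p,q]$. This requires careful bookkeeping with the translation length function and the combinatorial geometry of how two hyperbolic tree isometries with disjoint axes compose; the remaining steps ($G$-invariance, the projection argument for minimality, and the deduction of uniqueness) are comparatively routine once connectedness of $T_0$ is established.
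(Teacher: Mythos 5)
Your proof is correct and is precisely the standard Culler--Morgan argument that the paper cites for this proposition rather than reproving: a hyperbolic element exists by the contrapositive of Proposition \ref{bass serre}(3), the union $T_0$ of all axes is a $G$-invariant subtree because the bridge between two disjoint axes $A_g$ and $A_h$ lies on the axis of a hyperbolic product of $g$ and $h$, and every nonempty invariant subtree contains each axis by the nearest-point projection argument, which yields minimality and uniqueness simultaneously. The only quibble is in your aside on the bridge step: no passage to high powers is needed, since $gh$ itself is already hyperbolic with $\ell(gh)=\ell(g)+\ell(h)+2\,d(A_g,A_h)$ and axis containing the bridge, and the translation length of $g^nh^n$ is $n\bigl(\ell(g)+\ell(h)\bigr)+2\,d(A_g,A_h)$ rather than $2n\cdot\mathrm{length}([p,q])+\cdots$, though this does not affect the validity of your argument.
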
 

A natural setting for groups acting on $G$-trees is when $G$ splits as a free product with amalgamation, an HNN extension, or more generally as the fundamental group of a graph of groups. From \cite{MR1954121} there exists a tree $T$, referred to as the Bass-Serre tree, on which $G$ acts simplicially. For our purposes, we need only consider the case in which $G\cong A*_C B$. In this case, such a tree is described as follows. The vertices of the tree $T$ are given by $G/A \cup G/B$. The edges are given by $G/C$, with initial vertices $v_i( gC)= gA$ and the terminal vertices $v_t( gC)= gB$.
The group $G$ acts on $T$ on the left. The stabilizers of the vertices are the conjugates of $A$ and $B$, and the edge stabilizers are the conjugates of $C$.

\section{Properties of the Groups $P_k$}
 
Note that just as in $P$, each group $P_k$ is a free product with amalgamation. To see this, fix $k>0$, and take two Klein bottle groups 
\[ K_1 = \langle a, \; x \mid axa^{-1}x \rangle  
\text{ and } 
 K_2 = \langle y, \; b \mid byb^{-1}y \rangle \] 
with subgroups 
\[A_1 = \langle a^{2}, \; x \rangle \cong \mathbb{Z}^2
\text{ and } 
A_2 = \langle b^{2^k}, \; y \rangle \cong \mathbb{Z}^2.\]
respectively.
If we define the isomorphism 
\[\phi: A_1 \rightarrow A_2\text{ by } x \mapsto b^{2^k}\text{ and } a^{2} \mapsto y,\]
 then the free product of $K_1$ and $K_2$ with amalgamation of $A_1$ and $A_2$, by $\phi$ has the presentation
\[ K_1*_{A_1}K_2 \cong \langle a,\; b,\; x,\; y \mid  axa^{-1}x,\; byb^{-1}y,\; x=b^{2^k} y=a^2 \rangle \cong P_k. \] 

For concreteness, we will choose transversal
\[ T_{K_1} = \{1,\; a\} \text{ and } T_{K_2} = \{1,\; b,\; \dots, \; b^{2^k-1}\}. \]
So, as an amalgamated product with transversal $T_{K_1}$ we have the following results.

\begin{proposition}
\emph{(Normal Forms)}
\label{normal forms}

Every element $w\in P_k$ can be written uniquely in the form:
\[w= a^{2u}b^{2^kv}a^{\alpha}b^{\beta_1}ab^{\beta_2}a\dots b^{\beta_l}ab^{\beta}\]
 where $u, \; v \in \mathbb{Z}$, $\alpha \in \{0,\; 1\}$, $\beta_i\in \{1,\; b,\; \dots, \; b^{2^k-1}\}$, and  $\beta\in \{0,\; 1,\; b,\; \dots, \; b^{2^k-1}\}$
\end{proposition}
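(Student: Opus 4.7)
The plan is to apply the standard normal form theorem for amalgamated free products to the decomposition $P_k \cong K_1 *_{A_1} K_2$ established above, with the chosen transversals $T_{K_1}$ and $T_{K_2}$, and then repackage the amalgam factor using the free abelian coordinates on $A_1$.

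First I would invoke the classical amalgam normal form (see Serre's \emph{Trees} or the Bass-Serre reference from Section 2): once the transversals are fixed, every $w \in K_1 *_{A_1} K_2$ has a unique expression
\[w = c \cdot t_1 t_2 \cdots t_n\]
with $c$ in the amalgamated subgroup $A$ and $(t_1,\dots,t_n)$ a (possibly empty) alternating sequence of non-identity transversal representatives drawn from $T_{K_1}$ and $T_{K_2}$. The theorem also guarantees that $A_1$ embeds in $P_k$, so we may identify $A$ with its image there.

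Next I would pin down $A$ inside $P_k$. Under $\phi$, the generator $x$ becomes $b^{2^k}$ and $y$ becomes $a^2$, so $A = \langle a^2, b^{2^k}\rangle$. The Klein bottle relation $a x a^{-1} = x^{-1}$ gives $a^2 x a^{-2} = x$, so $a^2$ and $b^{2^k}$ commute; combined with $A_1 \cong \mathbb{Z}^2$, every element of $A$ has a unique expression $a^{2u} b^{2^k v}$ with $u, v \in \mathbb{Z}$. Since $T_{K_1} \setminus \{1\} = \{a\}$ is a singleton while $T_{K_2} \setminus \{1\} = \{b, b^2, \dots, b^{2^k - 1}\}$, any alternating sequence is completely parameterized by an initial bit $\alpha \in \{0, 1\}$ (indicating whether the sequence starts with an $a$), a non-negative integer $l$ counting interior $T_{K_2}$-letters between consecutive $a$'s, the corresponding exponents $\beta_1, \dots, \beta_l \in \{1, \dots, 2^k - 1\}$, and a terminal exponent $\beta \in \{0, 1, \dots, 2^k - 1\}$ (with $\beta = 0$ meaning the sequence ends with an $a$). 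Substituting and prepending $a^{2u} b^{2^k v}$ produces the asserted form, and uniqueness of $(u, v, \alpha, \beta_1, \dots, \beta_l, \beta)$ transfers verbatim from uniqueness in the amalgam normal form together with uniqueness of $\mathbb{Z}^2$-coordinates on $A$.

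The main obstacle is essentially bookkeeping: one needs to check that the tuple $(\alpha, l, \beta_1, \dots, \beta_l, \beta)$ really parameterizes alternating sequences without double-counting, handling the degenerate cases (empty sequence, one beginning in $T_{K_2}$, one ending in $T_{K_1}$), and to verify that the left-to-right ordering $a^{2u} b^{2^k v} a^{\alpha} b^{\beta_1} a \cdots$ is consistent with the normal form $c \cdot t_1 t_2 \cdots t_n$ — this uses only the commutation of $a^2$ and $b^{2^k}$ already noted. No further group-theoretic input beyond the amalgam decomposition and the $\mathbb{Z}^2$-structure of $A$ is required.
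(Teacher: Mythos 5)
Your proposal is correct and follows essentially the same route as the paper, which states the proposition as an immediate consequence of the standard normal form theorem for amalgamated free products applied to $P_k \cong K_1 *_{A_1} K_2$ with the chosen transversals. The paper supplies no further argument, so your explicit identification of the amalgam with $\langle a^2,\; b^{2^k}\rangle \cong \mathbb{Z}^2$ and your parameterization of the alternating transversal sequences are exactly the bookkeeping it leaves implicit.
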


As an amalgamated product of torsion-free groups, from \cite{MR1954121} we have
\begin{proposition}
\label{torsion theorem}
Every group $P_k$ is torsion-free.  
\end{proposition}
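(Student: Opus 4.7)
The plan is entirely structural. Since we have already expressed $P_k$ as the amalgamated free product $K_1 *_{A_1} K_2$, torsion-freeness of $P_k$ will follow from the classical Bass--Serre result that an amalgamated free product of torsion-free groups is itself torsion-free. Concretely, if $g \in A *_C B$ has finite order, then $g$ acts elliptically on the Bass--Serre tree and hence fixes a vertex; since vertex stabilizers are conjugates of $A$ or $B$, $g$ lies in a conjugate of a factor, and by hypothesis that factor contains no nontrivial torsion. Thus the real content is verifying that each Klein bottle group $K_i$ is torsion-free.

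For $K_1 = \langle a, x \mid axa^{-1}x \rangle$, I would rewrite the defining relation as $axa^{-1} = x^{-1}$, which exhibits $\langle x \rangle$ as a normal subgroup and shows that $K_1 / \langle x \rangle \cong \mathbb{Z}$ (the defining relator becomes trivial in the quotient, leaving $a$ as a free generator). To confirm that $\langle x \rangle$ is genuinely infinite cyclic inside $K_1$, I would produce an explicit surjection of $K_1$ onto the infinite dihedral group $\mathbb{Z} \rtimes \mathbb{Z}/2$ sending $x$ to a generator of the translation subgroup and $a$ to the reflection; this forces $x$ to have infinite order in $K_1$. Consequently we obtain a short exact sequence
\[ 1 \to \mathbb{Z} \to K_1 \to \mathbb{Z} \to 1, \]
and any torsion element of $K_1$ must project to the identity in the $\mathbb{Z}$ quotient, hence lie in $\langle x \rangle \cong \mathbb{Z}$, hence be trivial. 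The identical argument applies verbatim to $K_2$.

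Combining the two ingredients proves the proposition. The main (and rather modest) obstacle is the routine check that the normal subgroup $\langle x \rangle$ (respectively $\langle y \rangle$) is actually $\mathbb{Z}$ rather than some finite cyclic quotient; this is what the explicit homomorphism onto the infinite dihedral group is designed to rule out. Once the factors are certified torsion-free, the Bass--Serre corollary quoted from \cite{MR1954121} finishes the argument with no further work, and in particular no information specific to the parameter $k$ is needed beyond what was already used to set up the amalgamation.
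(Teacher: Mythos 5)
Your proof is correct and follows the same route as the paper, which simply invokes the standard Bass--Serre fact that an amalgamated free product of torsion-free groups is torsion-free, citing \cite{MR1954121}. The only addition is your explicit verification that the Klein bottle factors are torsion-free (via the extension $1 \to \mathbb{Z} \to K_i \to \mathbb{Z} \to 1$ and the map onto the infinite dihedral group), which the paper takes for granted; that verification is sound.
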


 Ultimately, we want to show that every group $P_k$ does not satisfy the unique product property and hence gives an infinite family of simple concrete examples. One issue that needs to be addressed is that some of the groups $P_k$ ($k>1$) could contain $P$ and hence not be truly new examples. We will show that every group does not contain $P$. This will be done by showing the following:
\begin{itemize}

\item If $A$, $B \in P_k$ where $\langle A,\; B\rangle$ fixes a line $L$ in $P_k$, and $\langle A,\; B\rangle$ acts on $L$ with no global fixed point, then the relations 
\[AB^2A^{-1}B^2=1 \text{ and } BA^2B^{-1}A^2=1 \]
can not simultaneously hold in $P_k$.
\item If $P \leq P_k$, then the induced action of $P$ on $P_k$ fixes a line $L_k$ in $T_k$.
\end{itemize}

\begin{lemma}
\label{lemma_1}
Suppose $\langle A, \; B\rangle$ fixes a line $L$ in $T_k$. If $A$ and $B$ are hyperbolic, then neither of the relations 
\[AB^2A^{-1}B^2=1 \text{ and } BA^2B^{-1}A^2=1 \]
can hold in $P_k$.
\end{lemma}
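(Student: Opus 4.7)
The plan is to reduce each of the alleged relations to an equation on signed translation lengths along $L$, and then derive a contradiction from the hypothesis that $A$ and $B$ are hyperbolic. First I would establish that both $A$ and $B$ act on $L$ as nontrivial translations. Since $A$ is hyperbolic, $Fix(A) = \emptyset$, and because the action of $P_k$ on $T_k$ is without inversion, the only way $A$ can stabilize the bi-infinite simplicial path $L$ with no fixed vertex and no edge inversion is by translating along it. Proposition \ref{bass serre}(1) then identifies $L$ with the unique axis $A_A$; the same argument gives $L = A_B$.

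Next I would fix an orientation of $L$ and, for any element $g \in \langle A, B\rangle$ that acts on $L$ as an orientation-preserving translation, let $\tau(g) \in \mathbb{Z}$ denote its signed translation length. Since $A$ and $B$ both translate $L$ (and in particular preserve its orientation), so does every word in them. The key structural observation is that conjugating a translation of $L$ by another translation of $L$ preserves both the length and the direction of translation, so $\tau \colon \langle A, B\rangle \to \mathbb{Z}$ is a group homomorphism. Moreover $\tau(A) \neq 0$ and $\tau(B) \neq 0$, since $A$ and $B$ are hyperbolic.

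With $\tau$ in hand the argument is immediate. Applying $\tau$ to the alleged relation $AB^2A^{-1}B^2 = 1$ yields
\[ 0 \;=\; \tau(A) + 2\tau(B) - \tau(A) + 2\tau(B) \;=\; 4\tau(B), \]
contradicting $\tau(B) \neq 0$; the symmetric computation applied to $BA^2B^{-1}A^2 = 1$ forces $\tau(A) = 0$, again a contradiction. The main obstacle is really conceptual rather than computational: one must be careful that $\tau$ records the \emph{signed} translation length and that this sign is conjugation-invariant for elements sharing a common axis. Once that point is isolated — using uniqueness of the axis from Proposition \ref{bass serre}(1) together with the fact that a hyperbolic stabilizer of a line is orientation-preserving on it — the rest of the proof is bookkeeping.
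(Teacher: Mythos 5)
Your argument is correct, and it takes a genuinely different (and more streamlined) route than the paper's. Both proofs start from the same observation: a hyperbolic element stabilizing $L$ must translate along it, since the action is without inversion (ruling out reflections through edge midpoints) and hyperbolicity rules out reflections through vertices and the identity. After that they diverge. The paper never introduces the translation-length homomorphism explicitly; instead it picks $m,n$ so that $A^nB^{-m}$ acts trivially on $L$, locates that element in the edge group $\langle a^2, b^{2^k}\rangle$, substitutes $A^n = a^{2s_1}b^{2^kt_1}B^m$ into $BA^{2n}B^{-1}A^{2n}=1$, and uses the fact that the edge group is normalized (with squares central) to conclude $B^{4m}\in\langle a^2, b^{2^k}\rangle$, contradicting hyperbolicity of $B$. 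Your homomorphism $\tau\colon\langle A,B\rangle\to\mathbb{Z}$ applied directly to the relators gives $4\tau(B)=0$ and $4\tau(A)=0$ in one line, with no need to identify the edge group or compute with normal forms in the amalgam; it is more elementary and strictly more general, applying to any two hyperbolic isometries with a common axis in any $G$-tree. The one thing the paper's computation buys is uniformity of technique with Lemma \ref{lemma_2}, where a normal-form analysis really is required; your $\tau$ vanishes identically on elliptic elements and so would give no information in that case.
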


\begin{proof}
Suppose $A$ and $B$ are hyperbolic elements that stabilize the same line $L$. Then there are $m,\;n\in \mathbb{Z}$ so that $A^nB^{-m}$ fixes $L$ pointwise. So $A^nB^{-m}\in \langle a^2,\; b^{2^k}\rangle$ or rather $A^n = a^{2s_1}b^{2^kt_1} B^m$, for some $s_1,\; t_1 \in \mathbb{Z}$. By assumption, the relation $BA^2B^{-1}A^2=1$ holds and so the relation
\[1=BA^{2n}B^{-1}A^{2n}= B(a^{2s_1}b^{2^kt_1} B^m)^{2}B^{-1}(a^{2s_1}b^{2^kt_1} B^m)^{2}
= a^{2s_2}b^{2^kt_2} B^{4m}\]
also holds. 
 It follows then that $B^{4m}\in \langle a^2,\; b^{2^k}\rangle$, contradicting the fact that $B$ is hyperbolic. A similar result holds if we assume that $AB^2A^{-1}B^2=1$ holds.
\end{proof}

\begin{lemma}
\label{lemma_2}
 If $A$ is hyperbolic  and $B$ is elliptic, then the following relations 
\[AB^2A^{-1}B^2=1 \text{ and } BA^2B^{-1}A^2=1 \]
can not simultaneously hold in $P_k$.
\end{lemma}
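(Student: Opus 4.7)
The plan is to exploit that $C := \langle a^2, b^{2^k}\rangle \cong \mathbb{Z}^2$ is normal in $P_k$ — this is immediate from the Klein-bottle relations $axa^{-1}=x^{-1}$ and $byb^{-1}=y^{-1}$ — and that the conjugation action $P_k \to \mathrm{Aut}(C)$ factors through the abelianization of $P_k/C \cong \mathbb{Z}/2 \ast \mathbb{Z}/2^k$; its image is the Klein four-group $\{I,\alpha,\beta,-I\} \subset GL_2(\mathbb{Z})$ with $\alpha = \mathrm{diag}(1,-1)$ coming from $a$ and $\beta = \mathrm{diag}(-1,1)$ from $b$. Let $L = A_A$ be the axis of $A$. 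The relation $BA^2B^{-1}=A^{-2}$ forces the axis $B\cdot A_A$ of the hyperbolic element $BA^2B^{-1}$ to equal $A_A$, so $B$ lies in the setwise stabilizer of $L$. Arguing as in Lemma~\ref{lemma_1}, $B$ cannot fix $L$ pointwise (otherwise $BA^2B^{-1}$ would act on $L$ as $A^2$, not $A^{-2}$), so $B$ reflects $L$ about a vertex $p$; after conjugating we may take $p = K_1$ (Case~I, so $B \in K_1$) or $p = K_2$ (Case~II, so $B \in K_2$). Normality of $C$ makes every edge stabilizer in $T_k$ equal $C$, so the pointwise stabilizer of $L$ is $C$, giving $B^2 \in C$; moreover $B^2 \neq 1$, since $B^2 = 1$ forces $B = 1$ by torsion-freeness and then the second relation gives $A^4 = 1$, contradicting $A$ hyperbolic.

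The first relation now reads $\phi_A(B^2) = B^{-2}$, where $\phi_A$ is the image of $A$ in $\mathrm{Aut}(C)$. The image of $\bar A$ in $(\mathbb{Z}/2)^2$, and hence $\phi_A$, is pinned down geometrically: if $A$ translates $L$ by $2n$ edges, then $AB$ is a reflection of $L$ about a vertex whose type equals that of $p$ when $n$ is even and differs from it when $n$ is odd; a reflection at a $K_1$-vertex has image $(1,0)$ in $(\mathbb{Z}/2)^2$, while at a $K_2$-vertex it has image $(0, 2^{k-1}\bmod 2)$, which is trivial for $k \geq 2$. Three of the four resulting parity/type combinations give immediate contradictions: $n$ even yields $\phi_A = I$, so $\phi_A(B^2) = B^{-2}$ forces $B^4 = 1$ and hence $B = 1$; $n$ odd in Case~I yields $\phi_A = \alpha$, which fixes $B^2 = a^{2i}$ and cannot invert it for odd $i$. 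Only Case~II with $n$ odd (and necessarily $k \geq 2$) survives: $\phi_A = \alpha$ combined with the normal form in $K_2$ forces $B = b^{2^{k-1}m}$ with $m$ odd, so that $B^2 = x^m$ and, crucially, $\phi_B = I$.

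For this surviving case, set $c := BAB^{-1}A$; since $BAB^{-1}$ and $A$ translate $L$ by equal amounts in opposite directions, $c$ fixes $L$ pointwise and hence lies in $C$. Two identities fall out of the relations: $\phi_A(c) = c^{-1}$ (from expanding the second relation via $BAB^{-1} = cA^{-1}$), and $B^4 = c\,\phi_B(c)$ (from computing $B^2AB^{-2}$ two ways, once from the first relation yielding $AB^{-4}$ and once from $B(BAB^{-1})B^{-1}$). With $\phi_B = I$ and $\phi_A = \alpha$, these identities pin down $c = x^m$, and a direct calculation using $B^2A = AB^{-2}$ (the first relation) gives $(BA)^2 = cB^{-2} = x^m \cdot x^{-m} = 1$; torsion-freeness of $P_k$ then forces $BA = 1$, so $B = A^{-1}$ — but $A$ is hyperbolic and $B$ is elliptic, the final contradiction. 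The trickiest step will be the image bookkeeping that pins down $\phi_A$ from the parity of the translation length of $A$ and the vertex type of $p$; once $\phi_A$ is known, the endgame is a short abelian computation in $C$ combined with the torsion-freeness of $P_k$.
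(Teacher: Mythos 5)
Your argument is essentially correct for $k\geq 2$, and it is a genuinely different route from the paper's. The paper proves this lemma combinatorially: it normalizes $B$ to one of two explicit forms ($a^{2s}b^{2^kt}a$ or $a^{2s}b^{2^kt}b^{2^{k-1}}$), writes $A$ in the normal form of Proposition \ref{normal forms}, and extracts a contradiction from the cancellation pattern of $AB^2A^{-1}B^2$ and $BA^2B^{-1}A^2$ via the parity invariants $\sigma_a,\sigma_b$ and a pairing of $b$-exponents. You instead exploit the structural fact that $C=\langle a^2,b^{2^k}\rangle$ is normal (so every edge stabilizer of $T_k$ equals $C$, and the pointwise stabilizer of any line is $C$), read the relations as statements in the four-element image of $P_k\to\mathrm{Aut}(C)$, and pin down $\phi_A$ from the vertex types of the reflection points of $B$ and $AB$. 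I checked the key steps: $B$ must invert $A_A$ about a vertex, $B^2\in C$, the type/parity table for $\phi_A$, the elimination of the $y$-part of $B$ in the surviving case, and the endgame $c=BAB^{-1}A\in C$, $c=B^2$, $(BA)^2=cB^{-2}=1$. All of these go through. Your approach is arguably more conceptual and explains \emph{why} the relations fail (they would force a torsion element $BA$), at the cost of needing the normality of $C$ and some care with the $\mathrm{Aut}(C)$ bookkeeping; the paper's approach is self-contained modulo the normal form but is a longer case-by-case cancellation analysis.

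The one genuine gap is that your case analysis silently assumes $k\geq 2$: the statement ``a reflection at a $K_2$-vertex has trivial image in $\mathrm{Aut}(C)$'' fails for $k=1$, where such a reflection is $b^{\mathrm{odd}}y^j$ with image $\beta$. For $k=1$ the two ``$n$ odd'' cases then give $\phi_A=\alpha\beta=-I$, which is consistent with $\phi_A(B^2)=B^{-2}$ for \emph{any} $B^2\in C$, so the first relation eliminates nothing; and your endgame also degenerates there (with $\phi_B=\beta$ one only gets $\phi_B(c)c=B^4$, which leaves the $a^2$-component of $c$ free, so $(BA)^2=cB^{-2}$ need not be trivial). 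The lemma as stated is for arbitrary $k>0$ and the paper's proof does cover $k=1$ (via the ``$s=0$'' branch and the mod-$8$ count of $a$-exponents), so to match the stated result you would need to either close the $k=1$ cases by a separate argument or restrict the lemma to $k\geq2$ — which is all that the application to Theorem 4 ($P\not\leq P_k$ for $k>1$) actually requires.
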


\begin{proof}

Suppose otherwise. Note that every elliptic element in $P_k \cong K_1*_{A_1}K_2$ is conjugate to a word in $K_1$ or $K_2$, so conjugating if necessary, we may assume that 
\[ B=a^{2s}b^{2^kt}a^{r_1} \text{ or } B=a^{2s}b^{2^kt}b^{r_2}\]
where $r_1\in \{0,\; 1\}$ or $r_2 \in \{0,\; 1,\;\dots,\; 2^k-1\}$. From Proposition \ref{normal forms} we may write 
 \[A = a^{2u}b^{2^kv}a^{\alpha}b^{\beta_1}ab^{\beta_2}a \dots b^{\beta_l}ab^{\beta}\] as a reduced word in $P_k$ and since $A$ is hyperbolic, the subword  
\[a^{\alpha}b^{\beta_1}ab^{\beta_2}a \dots b^{\beta_l}ab^{\beta}\] 
contains non-trivial $a$ and $b$ subwords. Note that we need only consider 
\[ B=a^{2s}b^{2^kt}a \text{ or } B=a^{2s}b^{2^kt}b^{2^{k-1}}.\]
 Indeed, if either $r_1$ or $r_2$ is $0$, then the second relation implies that $A$ has finite order since $a^{2s}b^{2^kt}$ commutes with any square in $P_k$. Also, if $r_2 \neq 2^{k-1}$, then the right hand side of $1= AB^2A^{-1}B^2$ can be rewritten as
\[(a^{2u_1}b^{2^kv_1} )(a^{\alpha}b^{\beta_1} \dots b^{\beta_l}a) 
      (b^{r_3}) 
      (a b^{2^k-\beta_l} \dots ab^{2^k-\beta_1}a^{\alpha})
      (b^{r_3});
      \]  
a reduced word in normal form since $r_3= 2r_2\;(mod\; 2^k) \neq 0$. This violates uniqueness in Proposition \ref{normal forms}.  

  In either case of $B$, the idea of the proof is to analyze the possible values of
$\alpha,\; \beta_1,\; \beta_2,\; \dots,\; \beta_l,\;$ and $\beta$, and show that no such word $A$ exists.  

Consider the case $B= a^{2s}b^{2^k}a$.
The first relation
says that
\[ 1= AB^{2}A^{-1}B^{2} = a^{4s+2+\sigma_b(A)(4s+2)} \]
 which is true if and only if $\sigma_b(A)= -1$, where 
\[
   \sigma_b(A) = 
     \begin{cases}
1 & \text{if the sum of all the powers of }b \text{ in } A \text{ is even} \\
-1 & \text{if the sum of all the powers of }b \text{ in } A \text{  odd}. \\
\end{cases} 
\]

Suppose the relation 
\begin{equation} \label{case_1}
1=BA^2B^{-1}A^2= a^{2q}b^{2^kr} a(a^{\alpha}b^{\beta_1}a \dots b^{\beta_l}ab^{\beta})^2a^{-1}(a^{\alpha}b^{\beta_1}a \dots b^{\beta_l}ab^{\beta})^2
\end{equation}
holds. By assumption, $A$ is a hyperbolic element, and so $A^{2} \notin \langle a^2, \; b^{2^k} \rangle $. We claim that cancellation must occur in the subword $ab^{\beta} a^{-1}a^{\alpha}b^{\beta_1}$.
Otherwise, say in the case where $\alpha=0$ and $\beta\neq 0$, then the right hand side of \eqref{case_1}
 above can be written as a non-trivial word in normal form contradicting Proposition \ref{normal forms}.  
 Similarly, in the case where $\alpha=1$ and $\beta= 0$, the right hand side of \eqref{case_1} reduces to a non-trivial word in normal form, which also contradicts Proposition \ref{normal forms}. Hence, the only cases that need to be considered are when $\alpha=0$ and $\beta =0$ or when $\alpha=1$ and $\beta \neq0$. We will handle both cases at the same time, so for concreteness, relabel $\beta= \beta_{l+1}$. After reduction of the pair $aa^{-1}$, right hand side of \eqref{case_1} contains a subword of the form $b^{\beta_{i}+\beta_{j}}$. If $\beta_{i}+\beta_{j}=2^k$, move $b^{\beta_{i}+\beta_{j}}$ and the resulting $a^2$ to the far left in \eqref{case_1} as described by Proposition \ref{normal forms}. Repeat this process for the next resulting subword $b^{\beta_{i-1}+\beta_{j+1}}$. If at any stage of the reduction, we have $b^{\beta_s+\beta_t}\neq 2^k$, then the  reduced word in \eqref{case_1} is a non-trivial word in normal form, leading to a contradiction of Proposition \ref{normal forms}. Pairing off the powers of $b$ in this way,  
we have either:
\begin{enumerate}
 \item $\alpha =0$, $\beta=0$, $\beta_l+\beta_1=2^k$, $\beta_{l-1}+\beta_2=2^k$, $\dots$, $\beta_{\frac{l}{2}+1}+\beta_{\frac{l}{2}}=2^k$ (if $l$ is even),
 \item $\alpha =0$, $\beta=0$, $\beta_l+\beta_1=2^k$, $\beta_{l-1}+\beta_2=2^k$, $\dots$, $\beta_{\frac{l+1}{2}}+\beta_{\frac{l+1}{2}}=2^k$ (if $l$ is odd),
\item $\alpha =1$, $\beta\neq 0$, $\beta+\beta_1=2^k$, $\beta_{l}+\beta_2=2^k$, $\dots$, $\beta_{\frac{l+2}{2}}+\beta_{\frac{l+2}{2}}=2^k$ (if $l$ is even), or
\item $\alpha =1$, $\beta\neq 0$, $\beta+\beta_1=2^k$, $\beta_{l}+\beta_2=2^k$, $\dots$, $\beta_{\frac{l+1}{2}+1}+\beta_{\frac{l+1}{2}}=2^k$ (if $l$ is odd),
\end{enumerate}
In any event, this forces $\sigma_b(A)=1$ giving a contradiction.

Consider the other case, where $B=a^{2s}b^{2^kt}b^{2^{k-1}}$.  
Using the same normal form for $A$ as above, the relation
\[1= AB^{2}A^{-1}B^{2} = a^{4s+\sigma_b(A)4s}b^{2^{k+1}t+2^k+\sigma_a(A)(2^{k+1}t+2^k)}\]
 holds provided $\sigma_a(A)=-1$ and either $\sigma_b(A)=-1$ or $s=0$,
 where  
\[
   \sigma_a(w) = 
     \begin{cases}
1 & \text{if the sum of all the powers of }a \text{ in } w \text{ is even} \\
-1 & \text{if the sum of all the powers of }a \text{ in } w \text{  odd} \\
\end{cases}
\] 
 and $\sigma_b(A)$ is as above.
 
 An argument similar to the one above applied to the relation  
 \[1=BA^2B^{-1}A^2\]
 shows 
 \begin{enumerate}
 \item $\alpha =0$, $\beta \neq 0$, $\beta+\beta_1=2^{k-1}$, $\beta_l+\beta_2=2^k$,  $\dots$, $\beta_{\frac{l+1}{2}+1}+\beta_{\frac{l+1}{2}}=2^k$ ,
 \item $\alpha=0$, $\beta= 0$, $\beta_1=2^{k-1}$, $\beta_{l}+\beta_2=2^k$, $\dots$, $\beta_{\frac{l+1}{2}}+\beta_{\frac{l+1}{2}+1}=2^k$, or
  \item $\alpha =1$, $\beta=2^{k-1}$, $\beta_l+\beta_1=2^k$, $\beta_{l-1}+\beta_2=2^k$, $\dots$, $\beta_{\frac{l}{2}}+\beta_{\frac{l}{2}+1}=2^k$. 
\end{enumerate}
and so in every case, $\sigma_b(A)=1$.
 
 So we must have that $s=0$. If we simply count the number of exponents in $a$ of 
  $BA^2B^{-1}A^2$, one checks that after all possible cancellations, this is $8u + 4(2j+1)$ for some integer $j$, i.e. this is true by our description of $A$ and $B$ if no cancellations occur and any cancellation reduces the total number of exponents in $a$ by $8$. Since $8u + 4(2j+1)=0$ has no integer solution, this relation holding would contradict Proposition \ref{torsion theorem}.  
\end{proof}

\begin{lemma}
\label{lemma_3}
If $\langle A,\; B \rangle \subset P_k$ fixes some line $L$ in the Bass-Serre Tree $T_k$ where $A$ and $B$ are elliptic elements  with disjoint fixed point sets, then the following relations 
\[AB^2A^{-1}B^2=1 \text{ and } BA^2B^{-1}A^2=1 \]
can not simultaneously hold in $P_k$. 
\end{lemma}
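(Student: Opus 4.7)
My plan is to assume both relations hold in $P_k$ and derive a contradiction. The first step is to identify the pointwise stabilizer of $L$. Since the action is without inversion, each elliptic element stabilizing $L$ setwise either fixes $L$ pointwise or acts on $L$ as a reflection about a single vertex; the disjointness of $Fix(A)$ and $Fix(B)$ rules out the pointwise case for both, so $A$ and $B$ act as reflections about distinct vertices and hence $A^2, B^2$ fix $L$ pointwise. A direct check on the two defining relations shows that $Z := \langle a^2, b^{2^k} \rangle$ is normal in $P_k$, and at each vertex of $L$ the two edge stabilizers of the adjacent $L$-edges intersect in a $P_k$-conjugate of $Z$, which is $Z$ itself by normality. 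Hence the pointwise stabilizer of $L$ equals $Z \cong \mathbb{Z}^2$, and so $A^2, B^2 \in Z$.

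Next, because the hypotheses are conjugation invariant, I would simultaneously conjugate $A$ and $B$ so that $A$ fixes a vertex in $\{K_1, K_2\}$; then $A \in K_1 \cup K_2$, and I write $B = gB_0g^{-1}$ with $B_0 \in K_1 \cup K_2$. The constraint $A^2 \in Z$ together with $A \notin Z$ (every element of $Z$ acts trivially on $T_k$, so $A \in Z$ would force $Fix(A) = T_k$ to meet $Fix(B)$) pins down a rigid shape for $A$: either $A = a b^{2^k m}$ with $A^2 = a^2$, or $A = y^n b^{2^k m + 2^{k-1}}$ with $A^2 = a^{4n} b^{2^k(2m+1)}$. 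The same two shapes are the only options for $B_0$.

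To conclude, I would apply the parity homomorphisms $\sigma_a, \sigma_b : P_k \to \{\pm 1\}$ introduced in the proof of Lemma~\ref{lemma_2} (well defined because each defining relation has even $a$-exponent sum and even $b$-exponent sum). These satisfy $u a^2 u^{-1} = (a^2)^{\sigma_b(u)}$ and $u b^{2^k} u^{-1} = (b^{2^k})^{\sigma_a(u)}$, so the $P_k$-action on $Z$ is governed entirely by the pair $(\sigma_a, \sigma_b)$. Writing $A^2 = (a^2)^p (b^{2^k})^q$, the relation $BA^2B^{-1} = A^{-2}$ forces $\sigma_b(B_0) = -1$ whenever $p \neq 0$ and $\sigma_a(B_0) = -1$ whenever $q \neq 0$ (using that $\sigma_a, \sigma_b$ are unaffected by the outer conjugation by $g$), with a symmetric pair of constraints from $AB^2A^{-1} = B^{-2}$. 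A short tabulation on the two shapes available for each of $A$ and $B_0$ then shows that all four type-combinations $(A,B_0) \in \{K_1, K_2\}^2$ produce inconsistent sign data, the key input being that $2^{k-1}$ is even; for instance $A = a b^{2^k m}$ gives $(p,q)=(1,0)$ and demands $\sigma_b(B_0) = -1$, but both admissible shapes of $B_0$ yield $\sigma_b(B_0) = 1$ via $\sigma_b(y^{n'} b^{2^{k-1}(2m'+1)}) = (-1)^{2^{k-1}(2m'+1)} = 1$.

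The main obstacle I expect is the bookkeeping across the four type-combinations. In the mixed cases, where $A$ and $B_0$ lie in different Klein-bottle factors, relation (i) alone need not give an immediate contradiction, so one must track relation (ii) in parallel: compute $B^2 = g B_0^2 g^{-1}$, record the resulting coordinates $(p', q')$ for $B^2$ (checking that conjugation by $g$ only rescales signs through $\sigma_a(g), \sigma_b(g)$), and confirm that the simultaneous sign constraints on both $(\sigma_a(A), \sigma_b(A))$ and $(\sigma_a(B_0), \sigma_b(B_0))$ are unsatisfiable in each case.
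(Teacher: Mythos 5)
Your argument is correct, but it takes a genuinely different route from the paper. The paper disposes of this case in three lines by a reduction: since $Fix(A)\cap Fix(B)=\emptyset$, Proposition~\ref{bass serre}(2) makes $AB$ hyperbolic; the pair $(AB,\,B)$ generates the same subgroup and still satisfies both relations (a computation the paper leaves implicit: $(AB)B^2(AB)^{-1}B^2=AB^2A^{-1}B^2=1$, and $B(AB)^2B^{-1}(AB)^2=BABA^2BAB=BA\cdot A^{-2}B^2\cdot AB=B(A^{-1}B^2A)B=B\,B^{-2}B=1$, using $BA^2B^{-1}=A^{-2}$ and $A^{-1}B^2A=B^{-2}$); so Lemma~\ref{lemma_2} applies directly. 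Your proof is instead self-contained and structural: you identify the kernel of the action on $T_k$ as the normal subgroup $Z=\langle a^2,b^{2^k}\rangle$, use the reflection picture on $L$ to force $A^2,B^2\in Z$, and then read both relations off the diagonal conjugation action of $P_k$ on $Z\cong\mathbb{Z}^2$ through the characters $\sigma_a,\sigma_b$. This costs more setup but buys transparency: it isolates exactly why the relations fail, it bypasses the normal-form case analysis of Lemma~\ref{lemma_2}, and it makes visible that $k\ge 2$ is genuinely needed --- your step $\sigma_b\bigl(b^{2^{k-1}(2m'+1)}\bigr)=1$ fails for $k=1$, and indeed the statement is false in $P_1$ (take $A=a$, $B=b$), so the restriction is not an artifact of your method. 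One small repair: an element of $K_1\setminus Z$ whose square lies in $Z$ has the general form $a^{2s+1}b^{2^km}$, not only $ab^{2^km}$; its square is $(a^2)^{2s+1}$, so the coordinate $p$ is odd rather than equal to $1$. Since your sign argument only uses $p\neq 0$ (and, in the $K_2$ case, that $q$ is odd, hence nonzero), the tabulation over the four type-combinations goes through unchanged.
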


\begin{proof}
If $A$ and $B$ are elliptic elements with disjoint fixed point sets, then by Proposition \ref{bass serre}, $AB$ acts by translation on some line in $T_k$. Moreover, $\langle AB,\; B\rangle = \langle A,\; B\rangle$ and if $A$ and $B$ satisfy the relations above, then so do $AB$ and $B$. So $\langle AB,\; B\rangle$ satisfies the hypotheses of the preceding lemma and both relations which contradicts the preceding lemma.
\end{proof}

\begin{theorem}
If $k_1$ and $k_2$ are distinct natural numbers, then $P_{k_1}$ is not isomorphic to $P_{k_2}$ and for $k>1$, $P_k$ does not contain $P$.
\end{theorem}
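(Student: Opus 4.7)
The theorem has two independent parts. For the non-isomorphism statement, the abelianization dispatches it immediately: setting $[a,b]=1$, the relation $ab^{2^k}a^{-1}b^{2^k}$ collapses to $b^{2^{k+1}}$ and $ba^2b^{-1}a^2$ collapses to $a^4$, so $P_k^{\mathrm{ab}} \cong \mathbb{Z}/4 \oplus \mathbb{Z}/2^{k+1}$, a finite group of order $2^{k+3}$. These orders are pairwise distinct as $k$ varies, so $P_{k_1} \not\cong P_{k_2}$ whenever $k_1 \neq k_2$.

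For the non-containment statement, the plan is to apply Lemmas 1, 2, and 3. Assume for contradiction that $P \hookrightarrow P_k$ for some $k > 1$, and let $A, B \in P_k$ be the images of the defining generators $x, y$ of $P$, so that $AB^2A^{-1}B^2 = 1$ and $BA^2B^{-1}A^2 = 1$ hold in $P_k$. It suffices to exhibit a line $L \subset T_k$ preserved by $\langle A, B \rangle$ on which $\langle A, B \rangle$ has no global fixed point. To produce $L$, use that $P$ is virtually $\mathbb{Z}^3$, and by passing to the intersection of the conjugates of a finite-index $\mathbb{Z}^3$ subgroup, take a normal finite-index subgroup $H \triangleleft P$ with $H \cong \mathbb{Z}^3$. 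The vertex stabilizers of $T_k$ are conjugates of the Klein bottle groups $K_1, K_2$, which are virtually $\mathbb{Z}^2$ and so cannot contain $\mathbb{Z}^3$; hence by Proposition \ref{bass serre}(3), $H$ cannot fix a vertex of $T_k$, so some $h_0 \in H$ is hyperbolic with axis $L := A_{h_0}$. Commutativity forces every element of $H$ to preserve $L$, and then the uniqueness clause of Proposition \ref{bass serre}(1) combined with normality upgrades this to $P$-invariance: for any $p \in P$, the element $p h_0 p^{-1} \in H$ is hyperbolic with axis $p(L)$, which must equal $L$.

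With $L$ in hand, $\langle A, B \rangle \leq P$ preserves $L$, and the hyperbolic elements inherited from $H$ rule out a global fixed point. Splitting into the three cases by elliptic/hyperbolic type of $A$ and $B$, and noting that in the elliptic-elliptic case $Fix(A) \cap Fix(B) = \emptyset$ is forced by the absence of a global fixed point for $\langle A, B\rangle$, exactly one of Lemmas \ref{lemma_1}, \ref{lemma_2}, \ref{lemma_3} applies and yields the contradiction. The main obstacle I anticipate is producing this $P$-invariant line: ruling out a global fixed point for $H$ (via a rank comparison against the Klein bottle vertex groups together with Proposition \ref{bass serre}(3)), and propagating $H$-invariance to $P$-invariance (uniqueness of the hyperbolic axis together with normality of $H$). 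Once $L$ is secured, the three lemmas conclude the argument mechanically.
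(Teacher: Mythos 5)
Your proof is correct and follows essentially the same route as the paper: reduce to the abelianizations for the non-isomorphism claim, then use the action of $P$ on the Bass--Serre tree $T_k$ to find an invariant line and invoke Lemmas \ref{lemma_1}--\ref{lemma_3}. The only real difference is in how the invariant line is produced: the paper cites Proposition \ref{cm tree} for a unique minimal $P$-invariant subtree and then asserts that, $P$ being virtually $\mathbb{Z}^3$, this subtree must be a line, whereas you construct $L$ explicitly as the axis of a hyperbolic element of a normal finite-index $\mathbb{Z}^3$ subgroup and propagate invariance via commutativity, uniqueness of axes, and normality; your version actually supplies the detail the paper leaves implicit, and your observation that the elliptic--elliptic case with intersecting fixed-point sets is excluded by the absence of a global fixed point correctly closes the case analysis.
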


\begin{proof}
The first claim is clear as their abelianizations are not isomorphic. For the second claim, fix $k>1$ and suppose that $\langle A,\;B\rangle \cong P$ is a subgroup of $P_k$. Since $P_k$ acts on the Bass-Serre tree $T_k$, there is an induced action of $P$ on $T_k$ by isometries without edge inversion. It follows that the action of $P$ on $T_k$ has no global fixed point; otherwise, $P \leq K_1^{g}$ or $P \leq K_2^{g}$ for some $g\in P_k$ and in particular, this implies that the surface groups $K_1^{g}$ or $K_2^{g}$ contain a free Abelian group of rank $3$. Since $P$ is finitely generated and $T_k$ is non-trivial, by Proposition \ref{cm tree}, $T_k$ contains a unique minimal $P$-invariant subtree which we will denote by $L$. By Proposition \ref{bass serre}, $L$ contains at least one axis. On the other hand, since $P$ is a finite extension of $\mathbb{Z}^3$, the largest tree $P$ can act on is a line. So, if $P$ is a subgroup of $P_k$, we can deduce that $P$ acts simplicially on a line $L\subset T_k$. Applying Lemmas \ref{lemma_1}, \ref{lemma_2}, and \ref{lemma_3} gives us the desired contradiction.
 \end{proof}
We cannot say for certain whether the groups $P_k$, for $k>1$, contain one another.

\section{Family of groups}
Let $k$ be a fixed positive integer that we will use for the remainder of the paper. In this section, we will show that  $P_k$ does not satisfy the unique product property. Recall, that given a torsion-free group $G$, a subset of the form $\{xr^i \mid l \leq i \leq m\}$ for some $x$, $r\in G$ and $l,m\in \mathbb{Z}$ is said to be a \emph{left progression of ratio $r$}, or simply a \emph{left $r$-progression}. In $P_k$, consider the following $b$-progressions
\[X_0 = \{a^{-1}, \; a^{-1}b\},\]
\[X_i = \{b^ia^{-1}b^j \mid \; 0 \leq j \leq 2^k+1 \},\]
\[Y_l = \{b^lab^j \mid \; 1 \leq j \leq 2^k+1 \},\]
\[Z_0   = \{b^j \mid \; -2^k \leq j \leq 2^{k} \} \]
where  $1\leq i \leq 2^k-1$ and $0\leq l \leq 2^k-1$. Set 
\[T = \bigcup_{i=0}^{2^k-1} X_i \cup \bigcup_{j=0}^{2^k-1}Y_j \cup Z_0\]
  and for convenience, set $X=\bigcup_{i=0}^{2^k-1} X_i$ and  $Y = \bigcup_{j=0}^{2^k-1}Y_j$.
Proposition \ref{normal forms} shows every element in $T$ is distinct we will show that every element in $TT$ has no unique representation as follows. First, decompose $TT$ into smaller product sets of the form 
\[X_iX_j,\; Y_iX_j,\; X_iY_j,\; Y_iY_j,\; Z_0X_i,\; X_iZ_0,\; Z_0Y_i,\; Y_iZ_0,\; 
\text{ and }Z_0Z_0. \]
  
From there, we decompose these product sets further into progressions that are obtained as the product of single element in $T$ with one of the sets $X_i$, $Y_j$, or $Z_0$, which we will refer to as \emph{slices}. 

Showing $TT$ is a non-unique product set requires careful bookkeeping to make keeping track of the specific slices easier, we will adopt the following conventions. Write $x_{(n,m)}= b^n a^{-1} b^{m}$, $y_{(n,m)}= b^n a b^{m}$, and $z_{(0,n)} = b^n$ and if $u_{(m, i)} \in T$ and $W_n = \{w_{(n,j)}\mid l_n\leq j \leq m_n \}$ is one of our $b$-progressions listed above, we will denote the slices by 
\[u_{(m, i)}W_n = \{u_{(m, i)}w_{(n,j)} \mid l_n\leq j \leq  m_n\}.\] 

Clearly, any product in $TT$ that belongs to two of these slices has two different representations in $TT$. Using our choice of the $b$-progressions, we can efficiently show most of these slices are contained in at least one other slice. This reduces the number of elements we need to check to a much smaller set. For the remaining slices, the Klein bottle relations are used to show the remaining slices are contained in at least two of the subproduct sets listed above and hence have two distinct representations.

\subsection{Matching Common Words in the Progressions.\\} 
The following equalities and containments hold for subproduct sets in $TT$ as a result of the structure of the progressions. These are perhaps easiest to see visually, as in figures \ref{fig:T1}, \ref{fig:T2}, and \ref{fig:T3}, by writing the respective products $U_iX$, $U_iY$ and   $U_i Z_0$ in table form, where $U_i$ is an arbitrary progression in $T$. 
In figures \ref{fig:T1} and \ref{fig:T2}, the rows are labeled by individual words in a progression (written in order from the starting value $u_{i, s}$ to the ending value $u_{i, e}$) and the columns are labeled by the progressions in $X$ and $Y$ respectively.
In figure \ref{fig:T3}, both row and column are labeled by words in the respective progressions (also written in the order of the progression). In each the figures, the circled slices are those that are not paired up by the structure of the progressions mentioned above.\\

\textbf{Case $1$:} Consider products of the form $U_i Y$. As illustrated in figure \ref{fig:T1}, the slices along the diagonal lines are equal since we always have
\[   u_{(i, v+1)} Y_u = \{ b^i a^{\epsilon} b^{v+1} b^u a b^j \mid 1 \leq j \leq 2^k+1 \} = u_{(i, v)} Y_{u+1}, \]
where $\epsilon \in \{-1,\; 0,\; 1\}$ and $u$ and $v$ are taken in the appropriate range.
So the only slices we need consider separately, are those of the form
\[ u_{(i, s)}Y_0 \text{ and } u_{(i, e)} Y_{2^k-1} \]
for appropriate starting values $s$ and ending values $e$ of each progression.

\begin{figure}[h!]
\begin{tikzpicture}[description/.style={fill=white,inner sep=2pt}]
\matrix (m) [matrix of math nodes, row sep=.5em,
column sep=1em, text height=1.5ex, text depth=0.25ex]
{ \; & \; & Y_0 & Y_1 & Y_2 & \dots & Y_{2^k-2} & Y_{2^k-1}\\
u_{(i, s)} & \; & * & * & * & \dots & * & *\\ 
u_{(i, s+1)} & \; & * & * & * & \dots & * & *\\ 
u_{(i, s+2)} & \; & * & * & * & \dots & * & *\\
\vdots & \; & \vdots & \vdots & \vdots & \; & \vdots & \vdots\\
u_{(i, e-1)} & \; & * & * & * & \dots & * & *\\
u_{(i, e)} & \; & * & * & * & \dots & * & *\\
};

\path[-,font=\scriptsize]
(m-1-1.south) edge[auto] node[auto] {$ \; $} (m-1-8.south);

\path[-,font=\scriptsize](m-2-4) edge node[description]  {$ = $} (m-3-3);
\path[-,font=\scriptsize](m-3-4) edge node[description]  {$ = $} (m-4-3);
\path[-,font=\scriptsize](m-6-4) edge node[description]  {$ = $} (m-7-3);

\path[-,font=\scriptsize](m-2-5) edge node[description]  {$ = $} (m-3-4);
\path[-,font=\scriptsize](m-3-5) edge node[description]  {$ = $} (m-4-4);
\path[-,font=\scriptsize](m-6-5) edge node[description]  {$ = $} (m-7-4);

\path[-,font=\scriptsize](m-2-8) edge node[description]  {$ = $} (m-3-7);
\path[-,font=\scriptsize](m-3-8) edge node[description]  {$ = $} (m-4-7);
\path[-,font=\scriptsize](m-6-8) edge node[description]  {$ = $} (m-7-7);

\draw[thick,black] (m-1-2.north) -- (m-7-2.south);

\draw (m-2-3) circle [radius=.26];
\draw (m-7-8) circle [radius=.26];
\end{tikzpicture}
\caption{Matching Patterns for Products of the Form $U_i Y$} \label{fig:T1}
\end{figure} 

\textbf{Case $2$} Consider products of the form $U_i X$. Just as in Case $1$, we have similar identifications along the diagonal lines for all the slices with the same cardinality, as illustrated in figure \ref{fig:T2}. However, we also have proper containments, since the slices $u_{(i, j)}X_0$ only have cardinality $2$. There are two containments of particular interest, namely $u_{(i, s)}X_0 \subset u_{(i, s+1)}X_{2^k-1}$ and $u_{(i, s+1)}X_0 \subset u_{(i, s)}X_{1}$. Note that if $0\leq m\leq 2^k+1$, then 
\[u_{(i,s+1)}x_{(2^k-1,m)}=(b^ia^{\epsilon}b^{s+1})(b^{2^k-1}a^{-1}b^m)= 
b^ia^{\epsilon}b^{s}a^{-1}b^{m-2^k}\]  
is a consequence of the defining relation $ab^{2^k}a^{-1}b^{2^k} =1$.
After reindexing, it follows that former containment always occurs since 
\[u_{(i, s)}X_0 = \{ b^ia^{\epsilon}b^s a^{-1} b^j \mid j=0,\; 1 \} \subset \{b^i a^\epsilon b^s a^{-1} b^{j}\mid -2^k\leq j \leq 1\} = u_{(i, s+1)} X_{2^k-1}. \]
Containment in the latter case is clear, but it is worth mentioning this containment plays a very important role, later. The only slices we need to consider separately are those of the form 
\[ u_{(i, e)} X_{2^k-1} \text{ and the shortened } u_{(i, s)}X_1 \text{ written as } \{ u_{(i, s)} b a^{-1} b^j \mid 2\leq j \leq 2^k+1 \},\]
where once again $s$ and $e$ are the appropriate starting and ending values of the progression $U_i$.

\begin{figure}[h!]
\begin{tikzpicture}[description/.style={fill=white,inner sep=2pt}]
\matrix (m) [matrix of math nodes, row sep=.5em,
column sep=1em, text height=1.5ex, text depth=0.25ex]
{ \; & \; & X_0 & X_1 & X_2 & \dots & X_{2^k-2} & X_{2^k-1}\\
u_{(i, s)} & \; & * & * & * & \dots & * & *\\ 
u_{(i, s+1)} & \; & * & * & * & \dots & * & *\\ 
u_{(i, s+2)} & \; & * & * & * & \dots & * & *\\
\vdots & \; & \vdots & \vdots & \vdots & \; & \vdots & \vdots\\
u_{(i, e-1)} & \; & * & * & * & \dots & * & *\\
u_{(i, e)} & \; & * & * & * & \dots & * & *\\
};

\path[-,font=\scriptsize]
(m-1-1.south) edge[auto] node[auto] {$ \; $} (m-1-8.south);

\path[-,font=\scriptsize](m-2-4) edge node[description]  {$ \subset $} (m-3-3);
\path[-,font=\scriptsize](m-3-4) edge node[description]  {$ \subset $} (m-4-3);
\path[-,font=\scriptsize](m-6-4) edge node[description]  {$ \subset $} (m-7-3);

\path[-,font=\scriptsize](m-2-5) edge node[description]  {$ = $} (m-3-4);
\path[-,font=\scriptsize](m-3-5) edge node[description]  {$ = $} (m-4-4);
\path[-,font=\scriptsize](m-6-5) edge node[description]  {$ = $} (m-7-4);

\path[-,font=\scriptsize](m-2-8) edge node[description]  {$ = $} (m-3-7);
\path[-,font=\scriptsize](m-3-8) edge node[description]  {$ = $} (m-4-7);
\path[-,font=\scriptsize](m-6-8) edge node[description]  {$ = $} (m-7-7);

\path[-,font=\scriptsize] (m-2-3) edge[bend left=60] node[description]  {$\subset$} (m-3-8);

\draw[thick,black] (m-1-2.north) -- (m-7-2.south);

\draw (m-2-4) circle [radius=.26];
\draw (m-7-8) circle [radius=.26];
\end{tikzpicture}
\caption{Matching Patterns for Products of the Form $U_i X$ } \label{fig:T2}
\end{figure}

\begin{figure}[h!]
\begin{tikzpicture}[description/.style={fill=white,inner sep=2pt}]
\matrix (m) [matrix of math nodes, row sep=.5em,
column sep=.5em, text height=1.5ex, text depth=0.25ex]
{ \; & \; & z_{(0,-2^k)} & z_{(0,-2^k+1)} & z_{(0,-2^k+2)} & \dots & z_{(0,2^k-1)} & z_{(0,2^k)}\\
u_{(i, s)} & \; & * & * & * & \dots & * & *\\ 
u_{(i, s+1)} & \; & * & * & * & \dots & * & *\\ 
u_{(i, s+2)} & \; & * & * & * & \dots & * & *\\
\vdots & \; & \vdots & \vdots & \vdots & \; & \vdots & \vdots\\
u_{(i, e-1)} & \; & * & * & * & \dots & * & *\\
u_{(i, e)} & \; & * & * & * & \dots & * & *\\
};

\path[-,font=\scriptsize]
(m-1-1.south) edge[midway] node[auto] {$ \; $} (m-1-8.south);

\path[-,font=\scriptsize](m-2-4) edge node[description]  {$ = $} (m-3-3);
\path[-,font=\scriptsize](m-3-4) edge node[description]  {$ = $} (m-4-3);
\path[-,font=\scriptsize](m-6-4) edge node[description]  {$ = $} (m-7-3);

\path[-,font=\scriptsize](m-2-5) edge node[description]  {$ = $} (m-3-4);
\path[-,font=\scriptsize](m-3-5) edge node[description]  {$ = $} (m-4-4);
\path[-,font=\scriptsize](m-6-5) edge node[description]  {$ = $} (m-7-4);

\path[-,font=\scriptsize](m-2-8) edge node[description]  {$ = $} (m-3-7);
\path[-,font=\scriptsize](m-3-8) edge node[description]  {$ = $} (m-4-7);
\path[-,font=\scriptsize](m-6-8) edge node[description]  {$ = $} (m-7-7);

\draw[thick,black] (m-1-2.north) -- (m-7-2.south);

\draw (m-2-3) circle [radius=.26];
\draw (m-7-8) circle [radius=.26];
\end{tikzpicture}
\caption{Matching Patterns for Products of the Form $U_i Z_0$} \label{fig:T3}
\end{figure}

\textbf{Case 3:} Consider products of the form $U_iZ_0$. The equalities illustrated in the figure above are clear and in this case, the circled slices are matched as follows.
As illustrated in figure \ref{fig:T3}, each product has exactly two elements $\{u_{(i, s)}b^{-2^k},\; u_{(i, e)}b^{2^k} \}$ that are not identified within the table. If $U_i \neq Z_0$, then it is clear that 
\[ u_{(i, s)}b^{-2^k} = b^{2^k}u_{(i, s)} \subset Z_0U_i \text{ and } u_{(i, s)}b^{2^k} = b^{-2^k}u_{(i, s)}
\subset Z_0U_i \]
and $Z_0 U_i$ is contained in either $Z_0X$ or $Z_0Y$. Hence, these elements have no unique representation in $TT$. If $U_i= Z_0$, the elements not identified within the table are $\{b^{-2^{k+1}},\;b^{2^{k+1}} \}$. Since we have
\[ b^{-2^{k+1}}=ab^{2^k+1}b^{2^k-1}a^{-1}\in y_{(0, 2^k+1)}X_{2^k-1} \] 
\[b^{2^{k+1}} = b^{2^k-1}a^{-1}ab^{2^k+1}\in x_{(2^k-1, 0)}Y_0.\] 
these elements also have no unique representation in $TT$.

We can extend this idea further to account for the remaining slices in $Z_0Y$ and $Z_0X$. 
As illustrated in figure \ref{fig:T1}, the slices we have yet to account for in the subproduct set $Z_0Y$
are  
\[z_{(0,-2^k)}Y_0 = \{ab^j \mid 2^k+1 \leq j \leq 2^{k+1}+1\}\subset Y_0Z_0\] 
and 
\[z_{(0,2^k)}Y_{2^k-1} = \{b^{2^k-1}ab^j \mid 1-2^k \leq j \leq 1\}\subset Y_{2^k-1}Z_0.\] 

Similarly, as illustrated in figure \ref{fig:T2}, the slices we have yet to account for in the subproduct set $Z_0X$ are subsets of the slices 
\[z_{(0,-2^k)}X_1= \{ba^{-1}b^j\mid 2^k \leq 2^{k+1}+1\} \subset X_1Z_0\] 
and 
\[z_{(0,2^k)}X_{2^k-1} = \{b^{2^k-1}a^{-1}b^j \mid -2^k \leq j\leq 1\} \subset X_{2^k-1}Z_0.\]
This accounts for all the subproduct sets of the form $Z_0U_i$ and $U_i Z_0$.

\subsection{Matching the Remaining Words. \\}
Now consider the circled slices illustrated in Figures \ref{fig:T1} and \ref{fig:T2}.
Using the defining relations and the progression length we will show that each word in the remaining slices $u_{(m,i)}W_n\subset U_m W_n$ is equivalent to a word in another product set. There are $5$ cases to consider for the equivalences. Let $\epsilon \in \{-1,\; 1\}$ and let $n\in \{0,\; 1,\; \dots 2^k+1 \}$ be considered only when a slice of that form $b^{n}a^{\epsilon_1} b^{m} a^{\epsilon_2} b^{i}$ exists.\\
{\bf Important Note:} In order to eliminate unnecessary redundancy, when we say that a range of words of a specified form occur in the product set $U_{m_1} W_{m_2}$, we will always mean that these words come from the products:
\[(b^{m_1} a^{\epsilon_1} b)(b^{m_2}a^{\epsilon_2}b^{i}) \text{ and } 
(b^{m_1} a^{\epsilon_1} b^{2^k+1})(b^{m_2}a^{\epsilon_2}b^{i}) \]
that occur in $U_{m_1} W_{m_2}$ by taking the exponent $i$ over all possible values in $W_{m_2}$.
The only notable exception is when $U_{m_1}=X_0$ and then only products of the first type are considered.\\ \\
{\bf Case 1:}
Consider words of the form $b^{n}a^{-\epsilon}b^{0} a^{\epsilon} b^{i}$, $b^{n}a^{-\epsilon}b^{2^k} a^{\epsilon} b^{i}$ or $b^{n}a^{-\epsilon}b^{2^{k+1}} a^{\epsilon} b^{i}$ that occur in the slices,
$x_{(n,0)}Y_0$, $x_{(0,1)}Y_{2^k-1}$, $y_{(n,2^k+1)}X_{2^k-1}$, and \\ $x_{(n,2^k+1)}Y_{2^k-1}$
(note the slice $x_{(n,2^k+1)}Y_{2^k-1}$ is only considered for $n\neq 0$).
The words can be written as:
\[b^{n}a^{-\epsilon}b^{0} a^{\epsilon} b^{i}= b^{i+n},\]
\[b^{n}a^{-\epsilon}b^{2^k} a^{\epsilon} b^{i}= b^{n}a^{-\epsilon} a^{\epsilon} b^{i-2^k}=b^{n+i-2^k},\]
\[b^{n}a^{-\epsilon}b^{2^{k+1}} a^{\epsilon} b^{i}= b^{n}a^{-\epsilon} a^{\epsilon} b^{i-2^{k+1}}=b^{n+i-2^{k+1}}.\]
Clearly these are contained in the product set $Z_0Z_0$ which contains every word $b^j$ where $\{-2^{k+1}\leq j \leq 2^{k+1}\}$. \\ \\
{\bf Case 2:} 
Consider words of the form $b^{n}a^{\epsilon}b a^{\epsilon} b^{i}$, that occur in the shortened slices $ x_{(n, 0)}X_1$ and the slices $y_{(n, 1)}Y_0$. These words can be written as:
\[b^{n}a^{\epsilon}b a^{\epsilon} b^{i} = b^{n}a^{\epsilon}a^{-2\epsilon}a^{2\epsilon}b a^{\epsilon} b^{i}
=b^{n}a^{\epsilon}a^{-2\epsilon}b a^{-2\epsilon}a^{\epsilon} b^{i} = b^{n}a^{-\epsilon}b a^{-\epsilon} b^{i}.\] 
In each slice $ x_{(n, 0)}X_1$, the exponent $i$ in the equivalent words has range \\
$\{2\leq j \leq 2^k+1\}$ after the shortening that is illustrated in Figure \ref{fig:T2}. Each product set $Y_nY_0$ contain words of the form $b^{n}a b a b^{j}$ where $\{1-2^k \leq j \leq 2^k+1\}$.
Whence $ x_{(n, 0)}X_1 \subset Y_nY_0$. 

Similarly, $y_{(n, 1)}Y_0 \subset X_nX_1$. Indeed the range of $j$-exponents for each equivalent word in $y_{(n, 1)}Y_0$ is $\{1\leq j \leq 2^k+1\}$ and the product sets $X_nX_1$ contain words $b^{n}a^{-1} b a^{-1} b^{j}$ where $\{0 \leq j \leq 2^k+1\}$ if $n=0$ and $\{-2^k \leq j \leq 2^k+1\}$ otherwise.\\ \\
{\bf Case 3:} 
Consider words of the form $b^{n}a b^{2} a^{-1} b^{i}$ that 
occur in the shortened slices $y_{(n,1)}X_1$. These words can be written as:
\[b^{n}a b^{2} a^{-1} b^{i}= b^{n}a a^{-2} a^{2}b^{2} a^{-1} b^{j}=b^{n}a a^{-2} b^{2} a^{2}a^{-1} b^{i}
=b^{n}a^{-1} b^{2} a b^{i}.\]
In each slice $y_{(n,1)}X_1$, the exponent $i$ in the equivalent words has range\\ $\{2 \leq i \leq 2^k-1\}$ after shortening. Each product set $X_nY_1$ contains words of the form $b^{n}a b^{2} a b^{j}$
where $\{1 \leq j \leq 2^k+1\}$ in the event $n=0$ and $\{1-2^k \leq j \leq 2^k+1\}$ otherwise. 
In all cases it follows then that $y_{(n,1)}X_1\in X_nY_1$. \\ \\
{\bf Case 4:}
Consider words of the form $b^{r}a^{\epsilon}b^{2^k} a^{\epsilon} b^{i}$ or $b^{r}a^{\epsilon}b^{2^{k+1}} a^{\epsilon} b^{i}$, where $r$ is even that occur in the slices $ x_{(0, 1)}X_{2^k-1}$, $ x_{(r, 2^k+1)}X_{2^k-1}$, and $y_{(r, 2^k+1)}Y_{2^k-1}$. These words can be written as: 
\[b^{r}a^{\epsilon}b^{2^k} a^{\epsilon} b^{i} =  b^{r}a^{2\epsilon} b^{i-2^k} = a^{2\epsilon} b^{r+i-2^k} \]
\[b^{r}a^{\epsilon}b^{2^{k+1}} a^{\epsilon} b^{i} =  b^{r}a^{2\epsilon} b^{j-2^{k+1}} = a^{2\epsilon} b^{r+i-2^{k+1}}. \] 
If $r\neq 0$, each slice $ x_{(r, 2^k+1)}X_{2^k-1}$ the exponent $j=r+i-2^{k+1}$ in the equivalent words has range 
$\{r-2^{k+1}\leq j \leq r+1-2^k\}$. Each product set $Y_{r-1}Y_{2^k-1}$ contains words of the form $a^{-2}b^j$ where $j$ has range $\{r-2^{k+1} \leq j \leq r \}$. 
Comparing lengths of the $j$ exponents, it follows that $ x_{(r, 2^k+1)}X_{2^k-1} \subset Y_{r-1}Y_{2^k-1}$.

In the case $r=0$, we only have the slice $ x_{(0, 1)}X_{2^k-1}$ where the exponent \\$j= i-2^k$ has range $\{-2^k \leq j \leq 1 \}$. The product set $Y_{2^k-1} Y_{2^k-1}$ contains products of the form $a^{-2}b^j$ where $j$ ranges between $\{-2^k \leq j \leq 2^k\}$. 
This shows that $ x_{(0, 1)}X_{2^k-1} \subset Y_{2^k-1} Y_{2^k-1}$.

In each slice $y_{(r, 2^k+1)}Y_{2^k-1}$, the exponent $j=r+i-2^{k+1}$ in the equivalent words has range $\{r+1-2^{k+1}\leq j \leq r+1-2^k\}$. Each product set $X_{r+1}X_{2^k-1}$
contains contains words of the form $a^2b^j$ where the exponent $j$
ranges between $\{ r+1-2^{k+1}\leq j \leq r+2\}$. 
This shows $y_{(r, 2^k+1)}Y_{2^k-1} \subset X_{r+1}X_{2^k-1}$.\\ \\
{\bf Case 5:}
Similarly, if $r$ is odd, say in the slices $x_{(r,2^k+1)}X_{2^k-1}$ and $y_{(r,2^k+1)}Y_{2^k-1}$ the following holds:
\[b^{r}a^{\epsilon}b^{2^{k+1}} a^{\epsilon} b^{i} =  b^{r}a^{2\epsilon} b^{i-2^{k+1}} = a^{-2\epsilon} b^{r+i-2^{k+1}}. \]
The arguments here are identical to the preceding case, the only case of interest is the slice $y_{(2^k-1, 2^k-1)}Y_{2^k-1}$. The range of exponents $j=i-1-2^k$ has range 
$\{-2^k\leq j \leq 0\}$. These words are contained in the product set $X_0 X_{2^k-1}$ which contains words of this form in the range $\{-2^k \leq j \leq 1\}$.

\begin{small}
\begin{table}[H]
\begin{center}
\begin{tabular}{|c|c|c|}
\hline
\multicolumn{3}{|c|}{Remaining Elements in $TT$} \\
\hline 
 Slice & Rewritten Elements & Remaining Values for $j$ \\ \hline
$x_{(0,0)}X_1$ &  {$abab^j \subset Y_0Y_0$} & {$2 \leq j \leq 2^k+1$} \\ \hline
$x_{(0,1)}X_{2^k-1}$ & {$a^{-2}b^j \subset Y_{2^k-1}Y_{2^k-1}$} & {$-2^k \leq j \leq 1$} \\ \hline
$x_{(l,0)}X_{1}$ & {$b^labab^j \subset Y_{l}Y_{0}$} &{$2 \leq j \leq 2^k+1$} \\ \hline
$x_{(l,2^k+1)}X_{2^k-1}$ &  {$a^2b^j \subset Y_{l-1}Y_{2^k-1}$} & {$l-2^{k+1} \leq j \leq l+1-2^k$} \\ \hline
$x_{(m,0)}X_{1}$ & {$b^mabab^j \subset Y_{m}Y_{0}$} &{$2 \leq j \leq 2^k+1$} \\ \hline
$x_{(m,2^k+1)}X_{2^k-1}$ &  { $a^{-2}b^j  \subset Y_{m-1}Y_{2^k-1}$} & {$m-2^{k+1} \leq j \leq m+1-2^k$} \\ \hline
$x_{(2^k-1,0)}X_{1}$ & {$b^{2^k-1 }abab^j \subset Y_{2^k-1}Y_0$} & {$2 \leq j \leq 2^k+1$} \\ \hline
$x_{(2^k-1,2^k+1)}X_{2^k-1}$ & { $a^{2}b^j \subset Y_{2^k-2}Y_{2^k-1}$} & {$-1-2^k \leq j \leq 0$} \\ \hline
$y_{(n,1)}X_{1}$ & {$b^na^{-1}b^{2}ab^j \subset X_{n}Y_{1}$} & {$2 \leq j \leq 2^k+1$} \\ \hline
$y_{(n,2^k+1)}X_{2^k-1}$ & { $b^j \subset Z_0Z_0$} & {$n-2^{k+1} \leq j \leq n+1-2^k$} \\ \hline
$y_{(0,1)}Y_{0}$ &  {$abab^j \subset X_{0}X_{1}$} & {$1 \leq j \leq 2^k+1$ } \\ \hline
$y_{(0, 2^k+1)}Y_{2^k-1}$ & {$a^2b^j \subset X_1X_{2^k-1}$} &
{$1-2^{k+1} \leq j \leq 1-2^k$} \\ \hline
$y_{(l,1)}Y_{0}$ & {$b^labab^j \subset  X_{l}X_{1}$ }& {$1 \leq j \leq 2^k+1$} \\ \hline
$y_{(l,2^k+1)}Y_{2^k-1}$ & { $a^{-2}b^j \subset X_{l+1}X_{2^k-1}$} & {$l+1-2^{k+1} \leq j \leq l+1-2^k$} \\ \hline
$y_{(m,1)}Y_{0}$ & {$b^mabab^j \subset X_{m}X_{1}$} & {$1 \leq j \leq 2^k+1$} \\ \hline
$y_{(m,2^k+1)}Y_{2^k-1}$ & {$a^{2}b^j \subset X_{m+1}X_{2^k-1}$} & {$m+1-2^{k+1} \leq j \leq m+1-2^k$} \\ \hline
$y_{(2^k-1,1)}Y_{0}$ & {$b^{2^k-1}abab^j \subset X_{2^k-1}X_1$} & {$1 \leq j \leq 2^k+1$} \\ \hline
$y_{(2^k-1,2^k+1)}Y_{2^k-1}$ & {$a^{-2}b^j \subset X_{0}X_{2^k-1}$} & {$-2^{k} \leq j \leq 0$} \\ \hline
$x_{(n,0)}Y_{0}$ & {$b^j \subset Z_0Z_0$} & {$n+1 \leq j \leq n+1+2^k$} \\ \hline
$x_{(0,1)}Y_{2^k-1}$ & {$b^j\subset Z_0Z_0$} & {$1-2^k \leq j \leq 1$} \\ \hline
$x_{(n,2^k+1)}Y_{2^k-1}$ & {$b^j \subset Z_0Z_0$ } & {$n+1-2^{k+1} \leq j \leq n+1-2^k$} \\ \hline
\end{tabular}
\end{center}
\label{}
\end{table}
\end{small}

The chart above summarizes the results from the preceding $5$ cases as a systematic listing of the circled slices illustrated in Figures \ref{fig:T1} and \ref{fig:T2}, where \\$l\in \{1,\; 3,\; \dots,\; 2^k-3\}$, $m\in \{2,\; 4,\; 6,\; \dots,\; 2^k-2\}$, and we set $n\in \{0,\; 1,\; \dots,\; 2^k-1\}$ to only be considered for all values where a slice of that form exists. This completes the proof of Theorem \ref{nup}.

\section{Cardinalities of Non-unique Product Sets}
 From the standpoint of Conjectures \ref{zdc} and \ref{nuc}, it seems natural to consider the cardinality of the possible non-unique product sets in $G$. Indeed if the cardinality of such sets were bounded, then one need only consider products in $k[G]$ of bounded support size. In this section, we will show that this is not possible in general, by showing that each $P_k$ contains arbitrarily large square non-unique product sets.  

The construction in the preceding section shows that $P_k$ contains a set $T$ with cardinality $2^{2k+1}+2^{k+2}+1$ having the property that $TT$ contains no uniquely represented elements. This section can be thought of as a corollary to the construction in Section $4$, in the sense that the features of the product sets we will construct in this section will mimic those in the preceding section. Indeed, the only meaningful distinction will be the lengths of the given progressions, and in the case where $p$ and $q$ are $1$, they will be the same. Given the similarities, we will omit an exhaustive analysis of the products. The analogous construction is done as follows 

Let $p$ be any fixed positive odd integer and choose an odd integer $q$ so that $q-1$ is a multiple of $2^k$. For these odd integers $p$ and $q$, consider the following $b$-progressions in $P_k$.
\[X_0(p, q) = \{a^{-p}b^{j}\;\mid\; -q+1 \leq j \leq (2^k+1)q-2^k \}, \]
\[X_i(p, q) = \{ b^{i} a^{-p}b^{j}\;\mid\; -q+1 \leq j \leq (2^k+1)q \},\]
\[Y_l(p, q) = \{ b^{l}a^{p}b^{j} \;\mid\; -q+2\leq j \leq (2^k+1)q\},\]
\[Z_0(p, q) = \{ b^j \;\mid\; -2^k(\frac{q+1}{2})-(q-1) \leq j \leq 2^k(\frac{q+1}{2})+(q-1) \} \]
where  $1\leq i \leq 2^k-1$ and $0\leq l \leq 2^k-1$.
We want to show that 
\[T(p, q) = \bigcup_{i=0}^{2^k-1} X_i(p, q) \cup \bigcup_{l=0}^{2^k-1} Y_l(p, q) \cup Z_0(p, q) \subset P_k\]
has the property that the product set $T(p, q) T(p, q)$ has no uniquely represented element. 

The reduction of each word to its normal form given by Proposition \ref{normal forms} shows that the words in $T(p, q)$ are distinct. 
Analogous to the construction in Section $4$, the majority of the words are matched using the structure of the progressions and the remaining cases are handled separately. By construction, the matching patterns illustrated in  \ref{fig:T1}, \ref{fig:T2}, and \ref{fig:T3} are identical. Moreover, adjusting the exponents appropriately shows that the extensions to the slices in the product sets $Z_0(p,q)X(p,q)$ and $Z_0(p,q)Y(p,q)$ also holds, i.e. both $-2^k(\frac{q+1}{2})-(q-1)$ and $2^k(\frac{q+1}{2})+(q-1)$ are multiples of $2^k$. Therefore, we need only consider the elements that are not matched via the progressions.

\begin{small}
\begin{center}
\begin{longtable}{|l|l|l|}

\hline
\multicolumn{3}{|c|}{Remaining Elements in $T(p, q) T(p, q)$} \\ \hline
 
\hline \multicolumn{1}{|c|}{ Slice} & \multicolumn{2}{c|}{Rewritten Elements } \\ \hline 

\hline \multicolumn{ 3}{|c|} {Remaining Values for $j$} \\  \hline
\endfirsthead
\hline
\multicolumn{3}{|c|}%
{{  -Continued from previous page}} \\
\hline \multicolumn{1}{|c|}{{Slice}} &
\multicolumn{2}{c|}{Remaining Elements}  \\ \hline 
\multicolumn{3}{|c|}{Remaining Values for $j$}  \\ \hline
\endhead
\hline \multicolumn{3}{|l|}{{Continued on next page-}} \\ \hline
\endfoot
\hline 
\endlastfoot
\hline $x_{(0,-q+1)}X_1$ & \multicolumn{ 2}{c|} {$a^{p}  ba^{p}b^j \subset Y_0Y_0$} \\ \hline
  \multicolumn{ 3}{|c|}{$ 2^kq+2q-2^k\leq j \leq 2^kq+2q-1$} \\ \hline
$x_{(0,(2^k+1)q-2^k)}X_{2^k-1}$ & \multicolumn{ 2}{c|} {$ a^{-2p}b^j \subset Y_{2^k-1}Y_{2^k-1)}$} \\ \hline
  \multicolumn{ 3}{|c|}{$ 2-2^kq-2q\leq j \leq 1 $} \\ \hline
$x_{(l,-q+1)}X_{1}$ & \multicolumn{ 2}{c|} {$b^l a^{p}ba^{p}b^j \subset Y_lY_0$} \\ \hline
  \multicolumn{ 3}{|c|}{$ 2^kq+2q-2^k\leq j \leq 2^kq+2q-1$} \\ \hline
$x_{(l,(2^k+1)q)}X_{2^k-1}$ & \multicolumn{ 2}{c|} {$a^{2p}b^j \subset Y_{l-1}Y_{2^k-1}$} \\ \hline
  \multicolumn{ 3}{|c|}{$ l+2-2^kq-2q-2^k\leq j \leq l+1-2^k $} \\ \hline
$x_{(m,-q+1)}X_{1}$ & \multicolumn{ 2}{c|} {$b^ma^{-p}   ba^{-p}b^j\subset Y_mY_0$} \\ \hline
  \multicolumn{ 3}{|c|}{$2^kq+2q-2^k \leq j \leq 2^k+2q-1$} \\ \hline
$x_{(m,(2^k+1)q)}X_{2^k-1}$ & \multicolumn{ 2}{c|} { $  a^{-2p}b^j \subset Y_{m-1}Y_{2^k-1}$} \\ \hline
  \multicolumn{ 3}{|c|}{$ m+2-2^kq-2q-2^k\leq j \leq m+1-2^k $} \\ \hline
$x_{(2^k-1,-q+1)}X_{1}$ &  \multicolumn{ 2}{c|} {$ b^{2^k-1}a^{p}ba^{p}b^j \subset Y_{2^k-1}Y_0$} \\ \hline
  \multicolumn{ 3}{|c|}{$ 2^kq+2q-2^k\leq j \leq 2^kq+2q-1 $} \\ \hline
$x_{(2^k-1,(2^k+1)q)}X_{2^k-1}$ &  \multicolumn{ 2}{c|} {$  a^{2p}b^j \subset Y_{2^k-2}Y_{2^k-1}$} \\ \hline
  \multicolumn{ 3}{|c|}{$ 1-2^kq-2q\leq j \leq 0 $} \\ \hline
$y_{(n,-q+2)}X_{1}$ & \multicolumn{ 2}{c|} {$b^na^{p}b^{2 }  a^{-p}b^j \subset X_nY_1$} \\ \hline
  \multicolumn{ 3}{|c|}{$2^kq+2q-2^k \leq j \leq 2^kq+2q-1$} \\ \hline
$y_{(n,(2^k+1)q)}X_{2^k-1}$ & \multicolumn{ 2}{c|} {$b^j \subset Z_0Z_0$} \\ \hline
  \multicolumn{ 3}{|c|}{$n+2-2^kq-2q-2^k \leq j \leq n+1 -2^k$} \\ \hline
$y_{(0,-q+2)}Y_{0}$ & \multicolumn{ 2}{c|} {$a^{p}b a^{p}b^j \subset X_0X_1$} \\ \hline
  \multicolumn{ 3}{|c|}{$ 1 \leq j \leq 2^kq+2q-1$} \\ \hline
$y_{(0,(2^k+1)q)}Y_{2^k-1}$ & \multicolumn{ 2}{c|}{$a^{2p}b^j \subset X_{1}X_{2^k-1}$} \\ \hline
  \multicolumn{ 3}{|c|}{$ 3-2^kq-2q-2^k \leq j \leq 1-2^k$} \\ \hline
$y_{(l,-q+2)}Y_{0}$ & \multicolumn{ 2}{c|} {$b^la^{p}b a^{p}b^j \subset X_lX_1$} \\ \hline
  \multicolumn{ 3}{|c|}{$ 1 \leq j \leq 2^kq+2q-1$} \\ \hline
$y_{(l,(2^k+1)q)}Y_{2^k-1}$ & \multicolumn{ 2}{c|} {$a^{-2p}b^j \subset X_{l+1}X_{2^k-1}$} \\ \hline
  \multicolumn{ 3}{|c|}{$l+3-2^kq-2q-2^k\leq j \leq l+1-2^k $} \\ \hline
$y_{(m,-q+2)}Y_{0}$ & \multicolumn{ 2}{c|} {$ b^ma^{p}ba^{p}b^j  \subset X_mX_1$} \\ \hline
  \multicolumn{ 3}{|c|}{$ 1\leq j \leq 2^kq+2q-1$} \\ \hline
$y_{(m,(2^k+1)q)}Y_{2^k-1}$ & \multicolumn{ 2}{c|} {$a^{2p}b^j \subset X_{m+1}X_{2^k-1}$} \\ \hline
  \multicolumn{ 3}{|c|}{$ m+3-2^kq-2q-2^k\leq j \leq m+1-2^k  $} \\ \hline
$y_{(2^k-1,-q+2)}Y_{0}$ & \multicolumn{ 2}{c|} {$b^{2^k-1}a^{p}ba^{p}b^j \subset X_{2^k-1}X_1$} \\ \hline
  \multicolumn{ 3}{|c|}{$1 \leq j \leq 2^q+2q-1$} \\ \hline
$y_{(2^k-1,(2^k+1)q)}Y_{2^k-1}$ & \multicolumn{ 2}{c|} {$a^{-2p}b^j \subset X_{0}X_{2^k-1}$} \\ \hline
  \multicolumn{ 3}{|c|}{$2-2^kq-2q \leq j \leq 0$} \\ \hline
$x_{(n,-q+1)} Y_{0}$ & \multicolumn{ 2}{c|} {$b^j \subset Z_0Z_0$} \\ \hline
  \multicolumn{ 3}{|c|}{$ n+1\leq j \leq n+2^kq+2q-1 $} \\ \hline
$x_{(0,(2^k+1)q-2^k)}Y_{2^k-1}$ & \multicolumn{ 2}{c|} {$b^j  \subset Z_0Z_0$} \\ \hline
  \multicolumn{ 3}{|c|}{$ 3-2^kq-2q\leq j \leq 1 $} \\ \hline
$x_{(n,(2^k+1)q)}Y_{2^k-1}$ & \multicolumn{ 2}{c|} {$b^j \subset Z_0Z_0$} \\ \hline
  \multicolumn{ 3}{|c|}{$n+3-2^kq-2q-2^k \leq j \leq n+1-2^k$} \\ 
\end{longtable}
\end{center}
\end{small}

The chart above contains a systematic listing of all the matchings for each of the remaining words, where $(p,q)$ are suppressed. Once again, we let \\ $l\in \{1,\; 3,\; 5,\; \dots,\; 2^k-3\}$, $m\in \{2,\; 4,\; 6,\; \dots,\; 2^k-2\}$, and we take \\ $n\in \{0,\; 1,\; \dots,\; 2^k-1\}$ to only be considered for all values where a slice of that form exists. One checks that the words in each slice are reduced in exactly the same way as they are in Section $4.2$. Therefore, we need only focus on the range of $j$ exponents of words contained the smaller product sets. 
Analogous to Section $4.2$, the range of words of a specified form that occur in the product sets $U_{m_1}W_{m_2}$ listed in the chart above come from the products 
\[(b^{m_1} a^{p\epsilon_1} b^{-q+2})(b^{m_2}a^{p\epsilon_2}b^{i}),
  \dots,
(b^{m_1} a^{p\epsilon_1} b^{(2^k+1)q-2^k})(b^{m_2}a^{p\epsilon_2}b^{i})\]
taking $i$ over all possible values in $W_{m_2}$. In the event that 
$U_m\neq X_0(p,q)$, we also include the products  $(b^{m_1} a^{p\epsilon_1} b^{(2^k+1)q})(b^{m_1}a^{p\epsilon_2}b^{i})$. 
The results follow by comparing the relative lengths listed for the slices in the chart above.

As a result of the containments, $T(p, q)$ is also a non-unique product set. Each set $T(p,q)\subset P_k$ has cardinality  $(2^{2k+1}+5 \times2^k +2)q-(2^k+1)$ which establishes Theorem \ref{unbound}. 
In our construction, we only needed that $p$ was an odd positive integer, if we consider 
\[ \{ T(2n-1,\; q) \mid\; n\geq 1 \text{ and } q-1 \text{ is a fixed multiple of }2^k \},\] 
this also shows there are infinitely many distinct square non-unique product sets for any fixed cardinality.\\ 

\emph{Acknowledgments:} I would like to thank Max Forester and Peter Linnell for their helpful comments and suggestions. I would like to also thank the anonymous referee for a careful reading.


\vspace{5mm}

Department of Mathematics, 
University of Oklahoma,
Norman, OK 73019, USA

\emph{Email:}\texttt{\lowercase{wcarter@math.ou.edu}}

\end{document}